\documentclass[%
 aip,
 amsmath,amssymb,
 reprint,%
]{revtex4-1}

\usepackage{graphicx}
\usepackage{dcolumn}
\usepackage{bm}
\usepackage[mathlines]{lineno}

\usepackage{tikz}
\usepackage{amsfonts,amsthm,thmtools}
\usepackage{enumitem}
\usepackage{cleveref}
\usepackage{adjustbox}
\usepackage{overpic}

\newcommand{\panellabel}[1]{\put(-2,70){\textbf{(#1)}}}
\newcommand{\panelcaption}[1]{\par\vspace{-1pt}{\small\centering #1\par}}

\newcommand{\C}{\mathbb{C}}
\newcommand{\R}{\mathbb{R}}

\newcommand{\ww}{\mathbf{w}}
\newcommand{\zz}{\mathbf{z}}
\newcommand{\xx}{\mathbf{x}}

\newtheorem{Proposition}{Proposition}[section]

\theoremstyle{definition}
\newtheorem{ex}{Example}[section]

\makeatletter
\def\@email#1#2{%
 \endgroup
 \patchcmd{\titleblock@produce}
  {\frontmatter@RRAPformat}
  {\frontmatter@RRAPformat{\produce@RRAP{*#1\href{mailto:#2}{#2}}}\frontmatter@RRAPformat}
  {}{}
}%
\makeatother
\begin{document}

\preprint{AIP/123-QED}

\title[Blow-up Landscapes]{Blow-up Parameter Landscapes for Polynomial Dynamical Systems}
\thanks{In honor of Erik Bollt (1967--2025), who had a rare ability to make sophisticated mathematics feel lively and welcoming.  We greatly miss him. }
\author{Emil Graf}
 \email{jeg348@cornell.edu}
 \affiliation{Department of Mathematics, Cornell University, Ithaca, NY 14853, USA}
\author{Ioannis G. Kevrekidis}
\affiliation{Department of Applied Mathematics and Statistics and Department of Chemical and Biomolecular Engineering, Johns Hopkins University, Baltimore, MD 21218, USA}
\author{Alex Townsend}
\affiliation{Department of Mathematics, Cornell University, Ithaca, NY 14853, USA}

\date{\today}

\begin{abstract}
Finite-time blow-up is one of the ways in which a dynamical model can become singular, often signaling the breakdown of either the modeled physical system or the model itself. Determining whether blow-up occurs, and for which parameter values and initial conditions, is therefore a fundamental problem in the analysis of nonlinear dynamical systems. We develop a numerical framework for identifying regions of parameter space in which a dynamical system governed by a system of first-order ordinary differential equations with polynomial right-hand sides exhibits finite-time blow-up for at least one initial condition. The approach combines compactification of the phase space with computational algebraic techniques, producing partitioned parameter landscapes that reveal blow-up and non-blow-up regimes. Through several examples, we show that the method replaces problem-specific hand calculations with an automated computational tool for analyzing blow-up regions in parameter-dependent dynamical systems.
\end{abstract}

\maketitle

\begin{quotation}
Solutions of nonlinear differential equations can become unbounded in finite time.  This behavior is difficult to detect reliably from simulation, especially when the set of initial conditions yielding blow-up is small or when the model depends on parameters.  We introduce a computational-algebraic method for polynomial dynamical systems that divides parameter space into regions over which the blow-up behavior is persistent on each connected parameter region. 
The method compactifies phase space so that blow-up trajectories approach finite limits, then uses algebraic discriminants to find boundaries where the blow-up classification can change. 
The resulting ``blow-up landscapes'' provide a visual and computational way to identify parameter regimes in which finite-time blow-up or unresolved non-directional blow-up can occur.
\end{quotation}

\section{Introduction}
\label{sec:intro}
A trajectory of a dynamical system may remain perfectly regular for a while and then, at a finite time $t^*$, escape every bounded subset of state space, i.e., 
\[
        \|\xx(t)\|\to\infty \qquad \text{as } t\uparrow t^*,
\]
where $\|\cdot\|$ denotes the $2$-norm of a vector.
We say that such a trajectory undergoes a finite-time blow-up. This escape may represent a physical runaway, the onset of a shock, or simply a time horizon beyond which the equations should no longer be trusted.  In applications, it is often of interest how the possibility of blow-up changes as parameters are varied~\cite{matsue2017numerical,matsue2020numerical,moehlis2000bursts}.  

We study parametrized polynomial dynamical systems
\begin{equation} \label{eq:ODE}
        \dot{\xx}=f(\xx,\lambda), \qquad \xx(t)\in\R^d,\quad \lambda\in\R^m,
\end{equation}
where $f$ is a real polynomial in $\xx$ and $\lambda$, of total degree $n$ in $\xx$. We seek a decomposition of parameter space into regions over which the existence of finite-time blow-up is structurally stable.  In our classification a parameter value is marked as admitting blow-up if there is at least one admissible finite initial condition whose trajectory blows up in finite time.  This is a global question about the phase portrait, and it is not reliably answered by direct simulation alone.  A finite collection of initial conditions can easily miss a small basin of escape, while long integrations near a singular transition may confuse very rapid growth with true finite-time blow-up.

Our starting point is the classical idea of compactifying state space, which transforms infinity into a finite boundary and blow-up into trajectories that approach equilibria on that boundary~\cite{elias2006critical,matsue2017numerical,matsue2018blow,matsue2020numerical,kevrekedis2017infinity,bollt2018matching}. Then, for a fixed parameter value, one can study blow-up behavior by studying these boundary equilibria and their stability. The difficulty is that this analysis is usually carried out by hand and individually for each model.  Current approaches do not, by themselves, provide a systematic way to scan a multi-parameter family of systems.

Our main observation is that, for polynomial vector fields, the events that can change the blow-up classification can be defined by polynomial equations (see~\cref{sec:disc}). Projecting the zero sets of these equations to parameter space produces polynomial discriminant boundaries. Away from these boundaries, the relevant compactified phase portrait is structurally stable enough that the existence of finite-time blow-up persists on each region.

This turns the study of finite-time blow-ups into a computational-algebra problem.  We compute discriminants using elimination theory~\cite{cox1997ag}, decompose their complement using numerical algebraic geometry methods~\cite{cummings2025smooth,breiding2025computing,breiding2026elimination}, and then test one representative point in each region. We obtain a partition of parameter space into regions where blow-up is present, absent, or undetermined.

\begin{figure}
\centering
\begin{minipage}[t]{0.48\columnwidth}
\centering
\begin{overpic}[width=\linewidth]{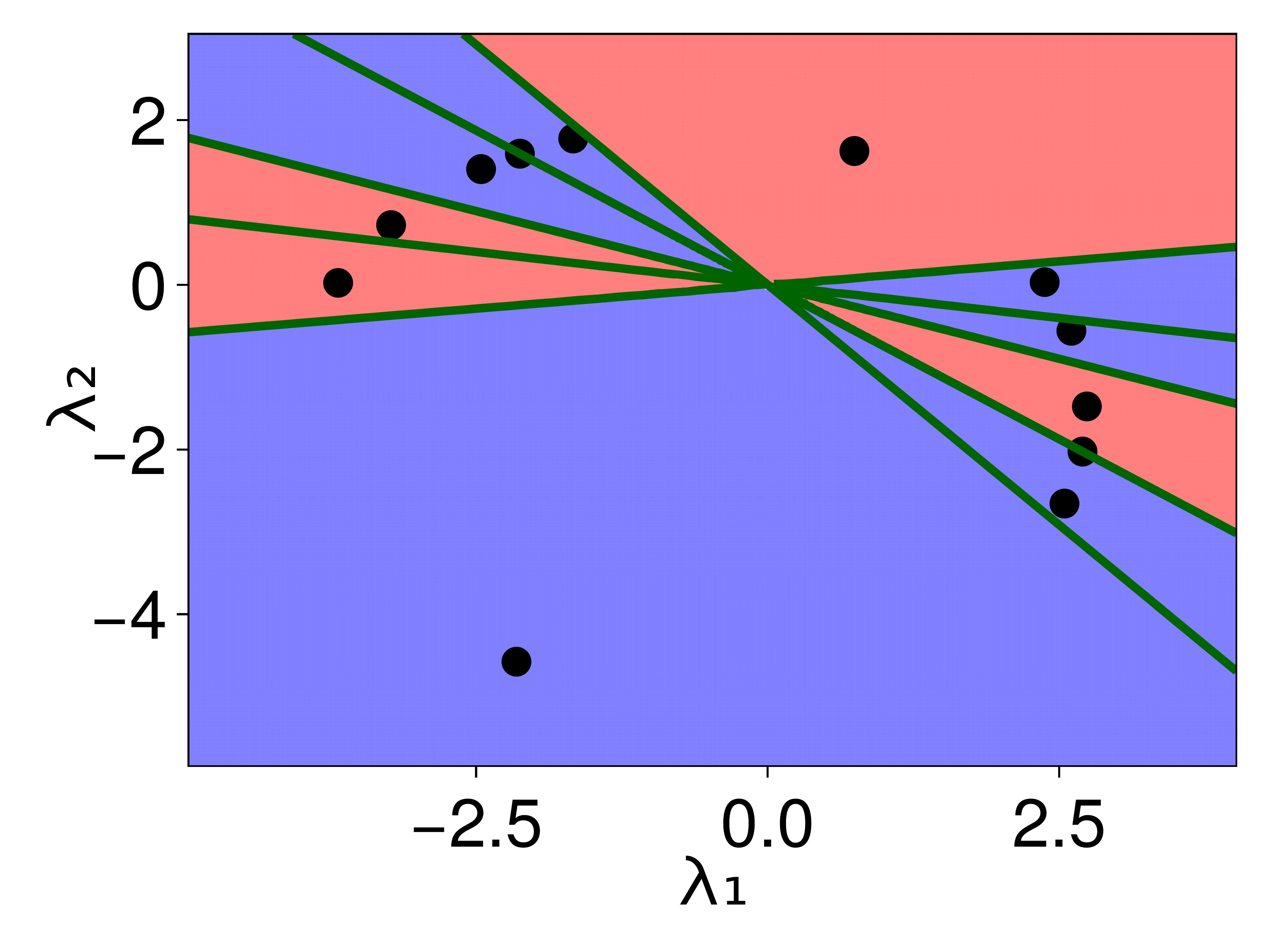}\panellabel{a}\end{overpic}\par
\panelcaption{\adjustbox{scale=0.835}{\footnotesize$\medmuskip=0mu \thickmuskip=.5mu \thinmuskip=0mu\relax\begin{aligned}\dot{x} &= (-\tfrac{1}{12}\lambda_1 -\tfrac{5}{12}\lambda_2)x^2 -\tfrac{1}{3}\lambda_1xy\\ &\quad + (\tfrac{7}{12}\lambda_1 + \tfrac{1}{2}\lambda_2)y^2\\[1pt] \dot{y} &= (\tfrac{1}{2}\lambda_1 + \tfrac{2}{3}\lambda_2)x^2 -\tfrac{1}{3}\lambda_2xy\\ &\quad + (-\lambda_1 + \tfrac{11}{12}\lambda_2)y^2\end{aligned}$}}
\end{minipage}
\hfill
\begin{minipage}[t]{0.48\columnwidth}
\centering
\begin{overpic}[width=\linewidth]{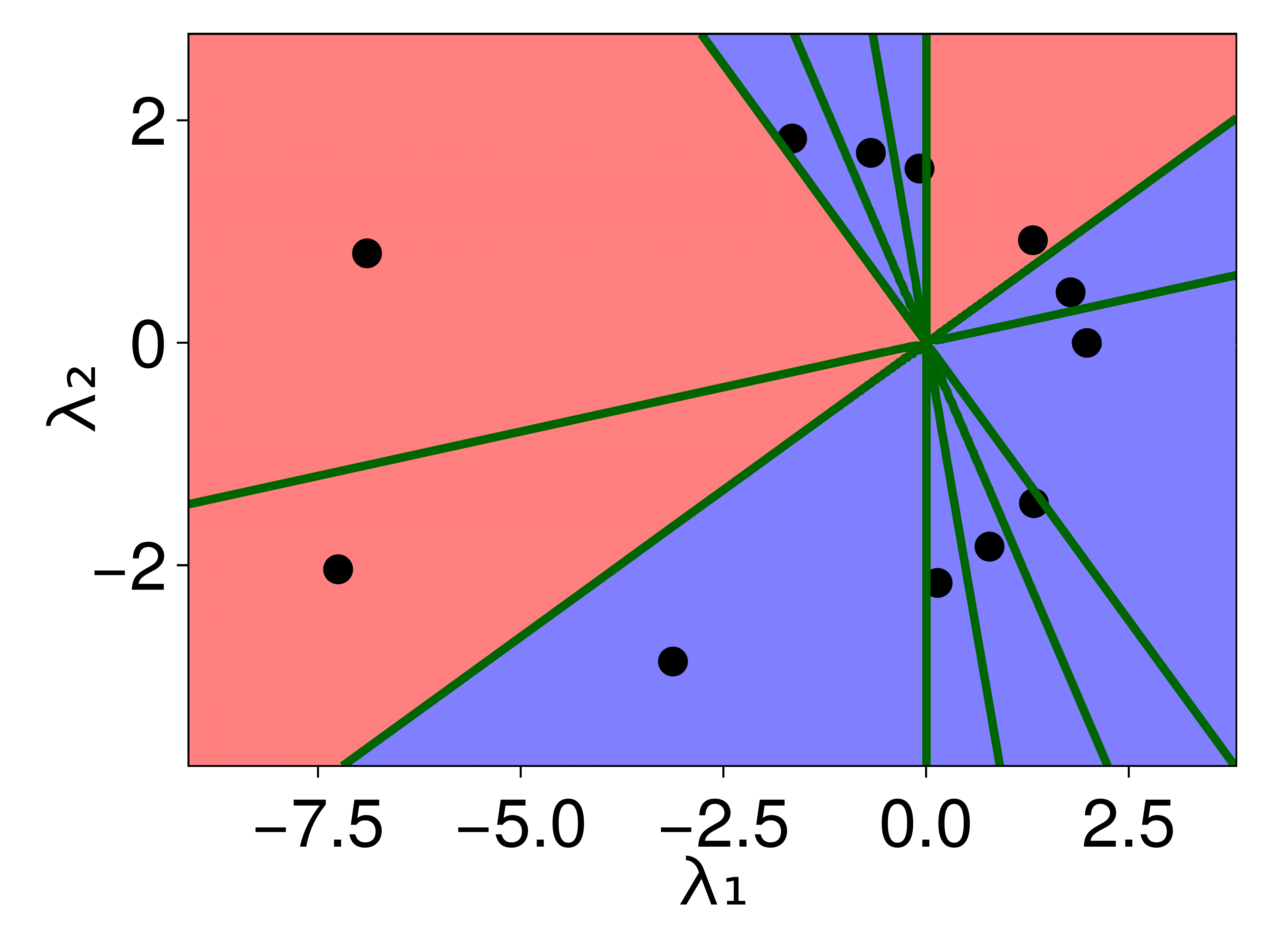}\panellabel{b}\end{overpic}\par
\panelcaption{\adjustbox{scale=0.835}{\footnotesize$\medmuskip=0mu \thickmuskip=.5mu \thinmuskip=0mu\relax\begin{aligned}\dot{x} &= (-\tfrac{1}{4}\lambda_1 + \tfrac{2}{3}\lambda_2)x^2 + (-\tfrac{5}{12}\lambda_1 + \tfrac{1}{3}\lambda_2)xy\\ &\quad + (\tfrac{1}{12}\lambda_1 + \tfrac{1}{12}\lambda_2)y^2\\[1pt] \dot{y} &= \tfrac{1}{3}\lambda_1x^2 + (\tfrac{3}{4}\lambda_1 -\lambda_2)xy\\ &\quad + (-\tfrac{5}{6}\lambda_1 -\tfrac{7}{12}\lambda_2)y^2\end{aligned}$}}
\end{minipage}
\par\vspace{2pt}
\begin{minipage}[t]{0.48\columnwidth}
\centering
\begin{overpic}[width=\linewidth]{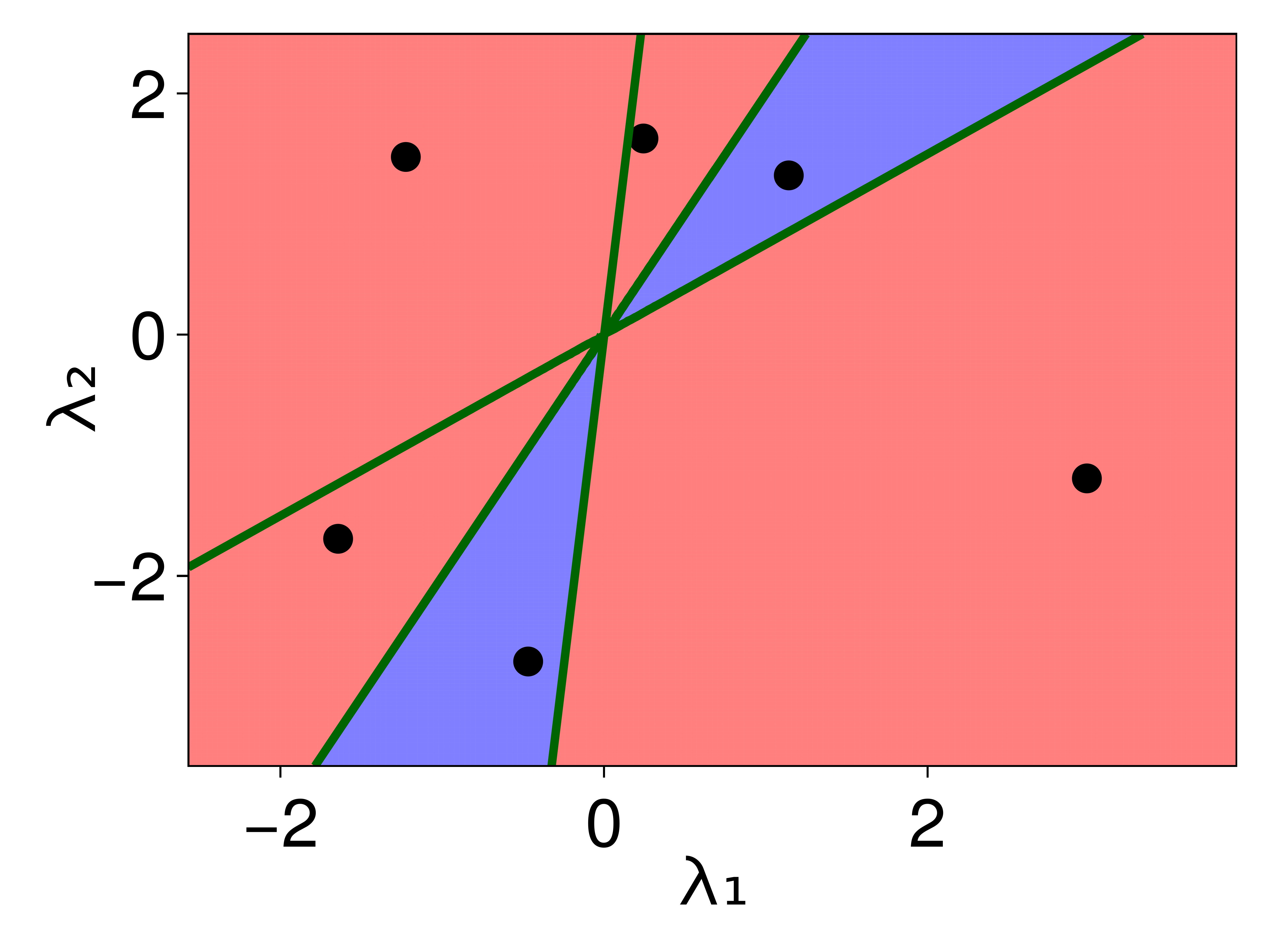}\panellabel{c}\end{overpic}\par
\panelcaption{\adjustbox{scale=0.835}{\footnotesize$\medmuskip=0mu \thickmuskip=.5mu \thinmuskip=0mu\relax\begin{aligned}\dot{x} &= (\tfrac{1}{2}\lambda_1 -\tfrac{2}{3}\lambda_2)x^2\\[1pt] \dot{y} &= (\tfrac{11}{12}\lambda_1 -\tfrac{1}{12}\lambda_2)x^2 + (\tfrac{5}{12}\lambda_1 -\tfrac{1}{4}\lambda_2)xy\\ &\quad + (-\tfrac{1}{3}\lambda_1 + \tfrac{1}{6}\lambda_2)y^2\end{aligned}$}}
\end{minipage}
\hfill
\begin{minipage}[t]{0.48\columnwidth}
\centering
\begin{overpic}[width=\linewidth]{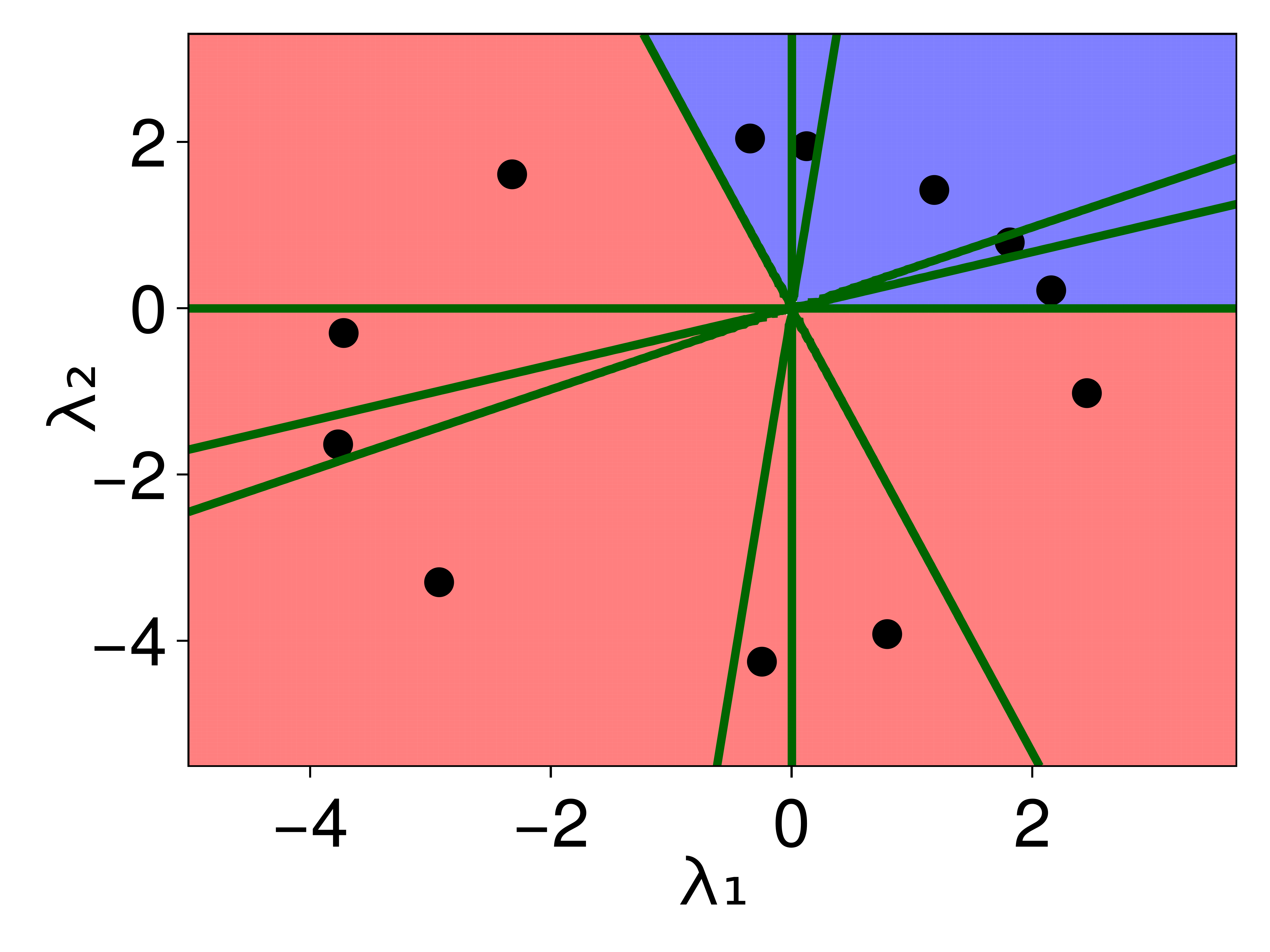}\panellabel{d}\end{overpic}\par
\panelcaption{\adjustbox{scale=0.835}{\footnotesize$\medmuskip=0mu \thickmuskip=.5mu \thinmuskip=0mu\relax\begin{aligned}\dot{x} &= (\tfrac{1}{4}\lambda_1 -\tfrac{7}{12}\lambda_2)x^2 + (\tfrac{7}{12}\lambda_1 + \tfrac{1}{3}\lambda_2)xy\\ &\quad -\tfrac{7}{12}\lambda_1y^2\\[1pt] \dot{y} &= -\tfrac{1}{6}\lambda_2x^2 + (-\lambda_1 + \tfrac{5}{12}\lambda_2)xy\\ &\quad + (-\tfrac{7}{12}\lambda_1 -\tfrac{2}{3}\lambda_2)y^2\end{aligned}$}}
\end{minipage}
\par\vspace{-1pt}
\centerline{\includegraphics[width=\columnwidth,clip,trim=0 44 0 44]{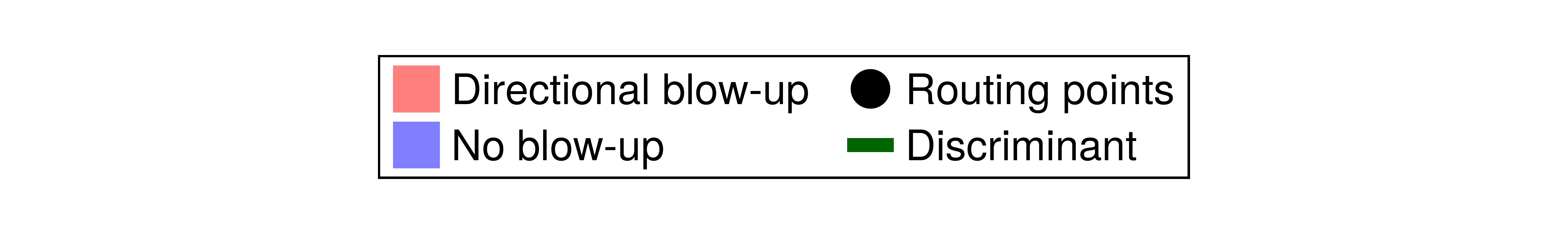}}
\vspace{-15pt}
\caption{Generic two-dimensional quadratic in \cref{subsec:gen}: positive-orthant blow-up landscapes for random linear slices.}
\label{fig:2dquad2}
\end{figure}

\section{Background}
\label{sec:background}

\subsection{Compactification of Blow-up Singularities}
Compactification embeds the $d$-dimensional state space of the system into a compact space, where trajectories that blow up in the original system now approach a finite limit in the new coordinates~\cite{elias2006critical,matsue2017numerical,matsue2018blow,matsue2020numerical,kevrekedis2017infinity,bollt2018matching}. We choose the Poincar\'e compactification, which embeds $\R^d \hookrightarrow S^{d} \subset \R^{d+1}$, the unit sphere in $\R^{d+1}$, by mapping
$$\xx \mapsto \left( \frac{\xx}{\sqrt{1+\|\xx\|^2}},\frac{1}{\sqrt{1+\|\xx\|^2}}\right):= (\zz,y),$$
with $\zz \in \R^d, y \in \R$ and $\|\zz\|^2 + y^2 = 1$. The set of points with $y > 0$ is in bijection with $\R^d$. The points with $y = 0$, which we call the ``sphere at infinity'', do not have a counterpart in the original space. We call equilibria of the compactified system on the sphere at infinity ``equilibria at infinity.'' We also consider the stability, hyperbolicity, and eigenvalues of these equilibria, for the system in \cref{eq:tausys,eq:ytausys}. These are distinct from stability, hyperbolicity, and eigenvalues in the original space, but have a critical role in blow-up; see \cref{sec:discbound} of the Supplementary Material for details. Crucially, $y \to 0$ if and only if $\|\xx\| \to \infty$ in the original space, so the $y$-dynamics alone determine whether a trajectory approaches infinity or not. We call $y$ the ``radial coordinate.''

\subsection{Equilibrium Points at Infinity}
\label{subsec:eqinf}
Under a time rescaling, compactified trajectories corresponding to finite-time blow-up approach the sphere at infinity after infinite rescaled time~\cite{elias2006critical,matsue2017numerical}. We derive a compactified system (\cref{eq:tausys,eq:ytausys}) with rescaled time, which, importantly, remains polynomial.

Let $\kappa = \sqrt{1+\|\xx\|^2}$. We can rewrite \cref{eq:ODE} as
\begin{equation*}
\begin{aligned}
    \frac{d \zz}{dt} &= \frac{d}{dt} (\xx/\kappa) = \kappa^{-1} \frac{d \xx}{d t} - \kappa^{-2} \left\langle \nabla \kappa, \frac{d \xx}{dt} \right\rangle \xx \\
    &= \kappa^{-1} [ f(\xx,\lambda) - \kappa^{-1} \left\langle \zz, f(\xx,\lambda) \right\rangle \xx] \\
    &= \kappa^{-1} [ f(\kappa \zz,\lambda) - \langle \zz, f(\kappa \zz,\lambda) \rangle \zz],
\end{aligned}
\end{equation*}
where $\langle \cdot,\cdot \rangle$ denotes the Euclidean inner product.
The $y$-coordinate determines whether a trajectory approaches infinity in the original space, so we separate the $y$-dynamics:
\begin{equation}
    \frac{dy}{dt}= -\frac{1}{2}(1+\|\xx\|^2)^{-3/2} \frac{d}{dt}\|
    \xx\|^2 = -y^3 \langle \xx, f(\xx,\lambda) \rangle.
\end{equation}
Write
$
f(\xx,\lambda) = f_0(\xx,\lambda) + \cdots + f_n(\xx,\lambda),
$
where $f_i$ consists of the degree $i$ homogeneous terms of $f$ in $\xx$. Define 
\begin{equation} \label{eq:ftilde}
\tilde{f}(\zz,\kappa,\lambda) \! = \! \kappa^{-n} f(\kappa \zz,\lambda)  \! = \! \kappa^{-n} f_0(\zz,\lambda) \! + \! \cdots \! + \! f_n(\zz,\lambda).
\end{equation}
Notice that we can write $\kappa = \kappa(T^{-1}(\zz,\lambda))$, where $T(\xx,\lambda) = \xx/\kappa$. Then we can rewrite the system as
\begin{equation} \label{eq:zsys}
    \frac{d \zz}{dt} = [\kappa(T^{-1}(\zz,\lambda))]^{n-1} [ \tilde{f}(\zz,\lambda) - \langle \nabla \kappa, \tilde{f}(\zz,\lambda) \rangle \zz].
\end{equation}
Then we define the time rescaling by
\begin{equation} \label{eq:taudef}
    \tau = \int_0^t (\kappa(\xx(s)))^{n-1} ds,
\end{equation}
so that
$
    \frac{d\tau}{d t} = \kappa(\xx(t))^{n-1}.
$
The time rescaling turns directional blow-ups into equilibria at infinity, and it depends critically on $n$, the degree of $f$ in $\xx$ in the original system~\cite{elias2006critical,matsue2017numerical}. We can rewrite \cref{eq:zsys} with respect to the rescaled time $\tau$ by
\begin{equation} \label{eq:tausys}
    \frac{d \zz}{d\tau} = \tilde{f}(\zz,\lambda) - \langle \nabla \kappa, \tilde{f}(\zz,\lambda) \rangle \zz := g(\zz,\lambda).
\end{equation}
We can also rewrite the $y$-dynamics as
\begin{equation} \label{eq:ytausys}
    \frac{d y}{d \tau} = -y^{n+2} \langle \xx, f(\xx,\lambda) \rangle.
\end{equation}
The reason to perform this transformation is to convert blow-ups into equilibria.  We say that a solution $\xx(t)$ has a directional finite-time blow-up in the direction $\xx^*$ if $\lim_{t \to t^*} \|\xx(t)\| = \pm \infty$ and $\lim_{t \to t^*} \frac{\xx(t)}{\|\xx(t)\|} = \xx^*$. The following result states that every directional blow-up gives an equilibrium point at infinity.
\begin{Proposition}[Ref.~\onlinecite{elias2006critical},~Prop.~2.5]
\label{prop:infconv}
Assume that a solution $\xx(t)$ to the ODE in \cref{eq:ODE} has a maximal time-interval of existence $(a,b) \subset \R$, and that $\xx$ tends to infinity in the direction $\xx^*$. Then $\xx^*$ is an equilibrium point of \cref{eq:tausys} with $y = 0$, where $y = \frac{1}{\sqrt{1+\|\xx\|^2}}$.
\end{Proposition}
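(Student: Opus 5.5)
We work in the rescaled time $\tau$ of \cref{eq:taudef} and the compactified variables $(\zz,y)$. Along the blow-up trajectory, $\zz(\tau) = \xx/\kappa \to \xx^*$ (the unit direction) and $y \to 0$ as $t \uparrow b$. The goal is to show that a trajectory of the autonomous, polynomial, hence locally Lipschitz system \cref{eq:tausys} converging to a point as $\tau \to \infty$ must converge to an equilibrium. This requires two ingredients: first, that the rescaled time reaches $\tau = \infty$ as $t \uparrow b$; second, that $g(\xx^*,\lambda)=0$ follows from convergence.

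\textbf{Step 1 (the rescaled time diverges).} This is the main obstacle. I would show $\int_0^b \kappa(\xx(s))^{n-1}\,ds = \infty$ by using the fact that the norm can grow no faster than the vector field allows. Since $|f(\xx)| \le C\kappa^n$, we have
\[
\frac{d\kappa}{dt} = \frac{\langle \xx, f\rangle}{\kappa} \le C\kappa^{n},
\qquad\text{so}\qquad
\frac{d\log\kappa}{dt} \le C\kappa^{n-1}.
\]
Integrating from $0$ to $t$ gives
\[
\log\kappa(t) - \log\kappa(0) \le C\,\tau(t).
\]
Because $\kappa(t)\to\infty$ as $t\uparrow b$, it follows that $\tau(t)\to\infty$. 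This argument covers both $b<\infty$ and $b=\infty$. For $n = 1$ blow-up cannot occur, so we may take $n\ge 2$, although the estimate holds regardless. Since $d\tau/dt>0$, the map $t\mapsto\tau$ is a bijection from $[0,b)$ onto $[0,\infty)$. Hence $\zz(\tau) \to \xx^*$ and $y(\tau)\to 0$ as $\tau\to\infty$.

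\textbf{Step 2 (the limit is an equilibrium).} The map $g$ in \cref{eq:tausys} is polynomial in $\zz$, extended to the closed ball via $\tilde f(\zz,\kappa)$ with $\kappa^{-1}=y$. It is therefore continuous up to $y=0$, and there
\[
g(\zz,\lambda) = f_n(\zz) - \langle \zz, f_n(\zz)\rangle \zz.
\]
I will argue by contradiction. Suppose $g(\xx^*,0) = v \neq 0$. By continuity there is a neighborhood $U$ of $(\xx^*,0)$ on which
\[
\langle g,v\rangle > \tfrac12 \|v\|^2 .
\]
For $\tau \ge \tau_0$ the trajectory stays in $U$, so
\[
\langle \zz(\tau)-\zz(\tau_0), v\rangle \ge \tfrac12\|v\|^2(\tau-\tau_0) \to \infty.
\]
This contradicts the boundedness of $\zz$ (indeed $\|\zz\|\le 1$). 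Hence $g(\xx^*)=0$.

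Together with $y=0$, which makes \cref{eq:ytausys} vanish, this shows that $(\xx^*,0)$ is an equilibrium at infinity. The points needing care are that the vector field is defined in terms of $(\zz,y)$ rather than $\xx$, and that its extension to $y=0$ is continuous. Both are immediate from \cref{eq:ftilde} once $\kappa^{-1}$ is replaced by $y$.
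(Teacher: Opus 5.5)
Your proof is correct. The paper does not prove this proposition itself: it imports it from Ref.~\onlinecite{elias2006critical} and restates it without proof in the supplement. The only related reasoning in the paper is the bare assertion that a trajectory needs infinite rescaled time $\tau$ to reach the sphere at infinity. That assertion appears in Sec.~\ref{subsec:eqinf} and again in the $d\le 2$ part of the proof of Proposition~\ref{prop:decomp}.

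Your Step~1 supplies exactly that fact, and does so quantitatively. In compactified variables, with $\tilde f=\sum_i y^{n-i}f_i(\zz)$, the radial equation is $dy/d\tau=-y\,\langle \zz,\tilde f\rangle$. Since $|\langle\zz,\tilde f\rangle|\le C$ on the closed ball, your inequality amounts to $y(\tau)\ge y(0)e^{-C\tau}$, valid whether $b<\infty$ or $b=\infty$. The softer alternative is to note that $\{y=0\}$ is invariant for the polynomial system, so by uniqueness an interior orbit cannot reach it at finite $\tau$. That route requires an extension-plus-uniqueness argument, which your estimate avoids.

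Step~2 is the standard fact that an orbit of a continuous autonomous field which converges as $\tau\to\infty$ must converge to a zero of the field. Projecting onto $v=g(\xx^*,0)$ proves it cleanly.

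One cosmetic point, which you already flag: state the radial equation in the form above. Then its vanishing at $y=0$ is literal, rather than read off from $-y^{n+2}\langle\xx,f(\xx,\lambda)\rangle$, which is undefined there.
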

It is possible to have $b = \infty$. Such systems, for example $\dot{x} = x$, with solution $Ce^{t}$, do not have finite-time blow-up, and are instead referred to as \emph{grow-up} trajectories~\cite{matsue2017numerical}. We can distinguish between blow-up and grow-up trajectories using the integral~\cite{matsue2017numerical}
\begin{equation} \label{eq:tmax}
    t_{\textrm{max}} = \int_{0}^{\infty} \frac{d \tau}{(\sqrt{1 + \|T^{-1}(\zz(\tau))\|^2})^{n-1}}.
\end{equation}
There is a finite-time blow-up if and only if $t_{\textrm{max}} < \infty$.

\subsection{Algebraic Elimination and Parameter Space Decomposition}

A set of equations $p_1,\ldots,p_k$ in the polynomial ring $\R[\xx,\lambda]$ defines an algebraic set $\mathcal{V}(p_1,\ldots,p_k) = \{(\xx,\lambda) : p_i(\xx,\lambda) = 0, 1 \leq i \leq k \}$. There is a natural projection $\pi_{\lambda}:\R^{d+m} \to \R^m$ given by $\pi_{\lambda}(\xx,\lambda) = \lambda$. Algebraic elimination computes a defining set of polynomials (an ideal) for the algebraic set $\pi_{\lambda}(\mathcal{V}(p_1,\ldots,p_k))$.~\cite{cox1997ag} This set can be computed symbolically using classical algebraic geometry methods. Algebraic elimination works over $\C$, but we can always project the resulting algebraic set back down to $\R$, which is standard~\cite{cummings2025routing}.

Previous work finds algebraic sets in $\R^{d+m}$ whose projection separates regions that have distinct stable equilibria~\cite{cummings2025routing}. We apply this approach to blow-ups. We list a set of polynomials in $\R[\xx,\lambda]$ such that the projection of their zero set splits the parameter space into regions on which the blow-up behavior is structurally stable. Projecting these polynomials gives a defining set for the boundaries of all the blow-up regions in the parameter space.

Decomposing the parameter space into the regions defined by this set is challenging. The classical symbolic method is \emph{cylindrical algebraic decomposition}, which is extremely expensive.~\cite{basu2006algorithms}
Fortunately, a highly useful numerical alternative has emerged in refs.~\onlinecite{cummings2025smooth,breiding2025computing,breiding2026elimination}. These numerical methods compute a skeletonized decomposition of the parameter space using so-called routing functions~\cite{cummings2025smooth}.

The zero set of a real polynomial $p \in \R[x_1,\ldots,x_d]$ splits $\R^d$ into finitely many connected components. A routing function for $p$ is a function $h:\R^d \to \R$ that has at least one critical point in each of these connected components, which can be explicitly written~\cite{cummings2025smooth,breiding2025computing,breiding2026elimination} down given $p$. The algorithms of refs.~\onlinecite{cummings2025smooth,breiding2025computing,breiding2026elimination} compute the critical points of the routing function, also known as the routing points, using numerical homotopy continuation. The critical points lying in the same region can be connected by gradient flow, which can also be used for membership testing. The full framework is implemented in Julia~\cite{breiding2025computing,breiding2026elimination}.

The decomposition gives at least one sample point in each region, and then allows membership testing of arbitrary points~\cite{breiding2025computing}. For our purposes, this means we can test whether or not there is blow-up at finitely many sample points, and then obtain an extremely simple test for any other point in the parameter space by running the regional membership test~\cite{breiding2025computing}.

\section{The Discriminant Boundary Between Blow-up Regions}
\label{sec:disc}

To set up our decomposition, we need to enumerate the ways in which an equilibrium at infinity can form or disappear as we vary the parameters, and the ways in which the convergence of the integral in \cref{eq:tmax} can change. The boundaries are the following, which are explained in detail in \cref{sec:discbound} of the Supplementary Material:
\begin{enumerate}[noitemsep,leftmargin=*]
    \item ``The Algebraic Discriminant'' $X_A$: The creation or destruction of an equilibrium at infinity.
    \item ``The Radial Discriminant'' $X_R$: The change of an equilibrium at infinity between attracting and repelling in the $y$-direction.
    \item ``The Stability Discriminant'' $X_S$: An eigenvalue of the Jacobian of $g$ in \cref{eq:tausys} at an equilibrium at infinity changing from positive real part to negative real part.
    \item ``The Degree Discriminant'' $X_D$: The change in degree of $f$ in $\xx$.
    \item ``The Positive Discriminant'' $X_P$: The creation or destruction of positive equilibria at infinity.
\end{enumerate}
The sets $X_S$ and $X_D$ are not regional boundaries, but instead function as exceptional sets where our analysis breaks down, so we always suppose that $\lambda \in \R^m \setminus (X_S \cup X_D)$. In the regions of parameter space defined by $X_A \cup X_R$, the following are constant:
\begin{enumerate}[noitemsep,leftmargin=*]
    \item The number of equilibria at infinity and the sign of the radial eigenvalue $\mu_\perp = \frac{\partial}{\partial y} \left( \frac{dy}{d \tau} \right)$ of each equilibrium.
    \item The degree of $f$, which is always equal to $n$ on the complement of the degree discriminant.
\end{enumerate}
The following proposition characterizes precisely when there is a blow-up. Note that an equilibrium point of the system in \cref{eq:tausys} is called hyperbolic if the eigenvalues of the Jacobian of $g$ all have nonzero real part.
\begin{Proposition} \label{prop:decomp}
    Consider the ODE in \cref{eq:ODE}, with $\xx \in \R^d$. Suppose $\lambda$ is fixed at a parameter value for which the degree of $f(\xx,\lambda)$ in $\xx$ is $n$, and all equilibria at infinity are hyperbolic. Suppose $n > 1$: If there is an equilibrium at infinity with $\mu_\perp < 0$, then there is an initial condition for which the system $\dot{\xx} = f(\xx,\lambda)$ blows up in finite time. Suppose $d \leq 2 $. If there exists at least one equilibrium at infinity, but there are no equilibria with $\mu_\perp < 0$, then the system does not have a finite-time blow-up for any initial condition.
    If $n = 1$, then there is no finite-time blow-up for any initial condition.
\end{Proposition}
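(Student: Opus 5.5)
Consider the three claims of \cref{prop:decomp} in reverse order, since the linear case is elementary and the other two rely on the compactified system \cref{eq:tausys,eq:ytausys} together with the blow-up criterion \cref{eq:tmax}.

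\emph{Case $n=1$.} Here $f(\xx,\lambda)=A(\lambda)\xx+b(\lambda)$, so $\|f\|\le C(1+\|\xx\|)$. Grönwall applied to $\frac{d}{dt}\|\xx\|^2\le 2C\|\xx\|(1+\|\xx\|)$ gives at most exponential growth on any bounded time interval. Hence every solution exists for all $t\ge 0$, and no finite-time blow-up occurs. Equivalently, in \cref{eq:tmax} the integrand is $1$ when $n=1$, so $t_{\max}=\infty$.

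\emph{Existence when $n>1$.} Let $\zz^*$ be a hyperbolic equilibrium at infinity with $\mu_\perp<0$. At $y=0$ the compactified vector field is tangent to the sphere at infinity, which is invariant. The full linearization at $(\zz^*,0)$ in the coordinates on $S^d$ therefore splits into the tangential eigenvalues (the Jacobian of $g$) and the normal eigenvalue $\mu_\perp$. Hyperbolicity of all tangential eigenvalues, plus $\mu_\perp<0$, makes the point hyperbolic in the full system. The stable manifold theorem gives a stable manifold whose tangent space contains the normal direction, so it meets $\{y>0\}$ in a nonempty set. Choose an initial condition $\xx_0$ corresponding to a point of this set. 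Its trajectory converges to $(\zz^*,0)$ exponentially in $\tau$; in particular $y(\tau)\le Ce^{-c\tau}$ for some $c>0$, because convergence on a stable manifold is exponential at rate bounded by the slowest stable eigenvalue. Since $\kappa=1/y$, the integrand of \cref{eq:tmax} is $y^{n-1}\le C^{n-1}e^{-c(n-1)\tau}$, which is integrable for $n>1$. Hence $t_{\max}<\infty$ and the solution blows up in finite time. The main technical point is the exponential decay of $y$ specifically; this needs the normal eigenvalue to contribute to the stable subspace with nonzero rate, which follows from hyperbolicity.

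\emph{Non-existence when $d\le2$.} Suppose $\xx(t)$ blows up at $t^*<\infty$. The main obstacle is to ensure that the blow-up is directional, so that \cref{prop:infconv} applies. I would use the planar structure. For $d=1$, the sphere at infinity consists of two points, and monotonicity forces the direction. For $d=2$, the sphere at infinity is a circle carrying finitely many hyperbolic equilibria (at least one by hypothesis), with the flow between them. The $\omega$-limit set in $\tau$ of the compactified trajectory is a nonempty compact invariant subset of the circle $\{y=0\}$, since $y\to0$. Because the circle contains an equilibrium, it is not a periodic orbit. So the limit set is a union of equilibria and connecting arcs, and by Poincaré–Bendixson-type arguments on $S^2$ the trajectory either converges to an equilibrium or accumulates on a cycle of arcs on the circle. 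In the first case \cref{prop:infconv} identifies an equilibrium $\zz^*$ at which, by hypothesis, $\mu_\perp>0$. Its stable manifold then lies entirely in $\{y=0\}$, since the normal direction is unstable, so no trajectory with $y>0$ converges to it, a contradiction. In the second case, every equilibrium on the cycle has $\mu_\perp>0$, so $y$ grows near each of them. Meanwhile time spent along the connecting arcs is bounded, and a standard return-map estimate shows that $y$ cannot tend to $0$, again a contradiction. This cycle case is where I expect the most work.
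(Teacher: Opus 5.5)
Your proposal is correct and follows essentially the paper's route: the stable manifold theorem plus exponential decay of $y$ gives $t_{\max}<\infty$ for $n>1$; for $n=1$ you use divergence of $t_{\max}$ (or Gr\"onwall); and for $d\le 2$ you apply Poincar\'e--Bendixson on the compactified disk, splitting into a single equilibrium versus a graphic. The cycle case you flag as the hardest, which the paper also only sketches via transit times, is in fact vacuous: at a hyperbolic equilibrium at infinity the tangential eigenvalue $S'(\theta^*)$ is nonzero, so the equilibria on the circle alternate between tangentially attracting and repelling, and no coherently oriented graphic can lie in $\{y=0\}$.
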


For the proof, see Supplementary Material, \cref{subsec:decompproof}.
We can now sort between parameter values where the system:
\begin{enumerate}[noitemsep,leftmargin=*]
\item has a directional finite-time blow-up (convergence to a limit point at $\infty$) for some finite initial condition,
\item cannot have a finite-time blow-up of any kind for any initial condition, and
\item cannot have directional finite-time blow-up, but may have other types of blow-up.
\end{enumerate}

We say that a trajectory blows up ``non-directionally'' if it blows up in finite time but $\xx(t)/\|\xx(t)\|$ does not tend to a finite limit~\cite{dumortier1999polynomial,matsue2025blow}. For example, if the limit set of the compactified system is a periodic orbit, then the system exhibits ``spiral blow-up''; compactified trajectories spiral out towards the sphere at infinity. We still observe many applications to which our framework applies fully. When $d \geq 3$ or there are no equilibria at infinity, we sort between directional blow-up and no directional blow-up, leaving other types of blow-up unresolved.

If only positive-orthant blow-up, where $x(t) \in (0,\infty)^d$ for all $t$ is of interest, the positive discriminant $X_P$ separates regions where the equilibria at infinity may have different signs on the coordinates. The previous theorem applies with positive-orthant blow-up replacing blow-up if we include the positive discriminant. We use this for most physical applications.

\section{Applications} \label{sec:ex}

For each example, we calculate all the above discriminants using symbolic elimination in Macaulay2~\cite{M2,cox1997ag}. Then we decompose the parameter space using the numerical methods of refs.~\onlinecite{cummings2025smooth,breiding2025computing}. We test each sample point obtained by the decomposition algorithm to determine whether or not the system has finite-time blow-up in a particular region by testing all equilibria at infinity to see if they are radially attracting. For all examples we take slices with $m=2$ parameters for visualization and typically keep the state space dimension $d \leq 4$.

Some examples we consider are large enough that either the symbolic elimination step or the numeric regional decomposition step does not terminate in a reasonable amount of time. If our symbolic-numeric pipeline is computationally infeasible, then we sample the discriminant sets numerically using a pseudo-witness set, a well-chosen linear slice that fully captures the projected discriminant set~\cite{hauenstein2010witness}. We then test a sample grid for blow-up, refining using the philosophy of a computer graphics algorithm called marching squares~\cite{maple2003ms} (see \cref{subsec:gen}).

\subsection{Generalized Lotka--Volterra}
\label{subsec:lv}

A generalized Lotka--Volterra (gLV) system $\dot x_i = x_i\big(r_i + \sum_j A_{ij}x_j\big)$ is a canonical model for interacting non-negative quantities, employed in a wide variety of applications~\cite{kusbeyzi2011lv,may1975nonlinear,jimenez2025dynamic,venturelli2018deciphering,hening2018persistence,stein2013ecological,vano2006chaos}. In refs.~\onlinecite{liang2024global,liang2025global}, the authors apply the Poincar\'e compactification to several Lotka--Volterra models with restricted coefficients to obtain global classifications of the dependence of equilibria, both finite and infinite, on parameters. We can accomplish a similar classification extremely efficiently using our framework.

As usual, the discriminant sees only the leading quadratic terms, i.e., $\dot x_i = x_i\sum_j A_{ij}x_j$. Each panel of \cref{fig:kolmogorov} is the positive-orthant parameter landscape over two swept interaction coefficients of a particular generalized Lotka--Volterra model. Our selected models are:
\begin{enumerate}[label=\textbf{(\alph*)},noitemsep,leftmargin=*]
  \item Cyclic competition among three species~\cite{may1975nonlinear}.
  \item Competitive Lotka--Volterra tumor-host model~\cite{jimenez2025dynamic}.
  \item Microbial interactions in synthetic human gut microbiome communities, pairwise model with quadratic self-interaction coefficients set to $-1$~\cite{venturelli2018deciphering}.
  \item Slice of a three species food chain with quadratic self-interaction coefficients set to $-1$~\cite{hening2018persistence}.
  \item Model of gut microbiota, restricted to the subsystem governing the interaction of \emph{Enterococcus},
\emph{Blautia}, and \emph{C. difficile}. The fixed coefficients are experimental values~\cite{stein2013ecological}.
  \item Four-dimensional gLV model~\cite{vano2006chaos}.
\end{enumerate}
We compute panels (a)--(e) by symbolic elimination and numeric decomposition, and (f) by numeric sampling.

\begin{figure}
\centering
\renewcommand{\panelcaption}[1]{\par\vspace{-4pt}{\small\centering #1\par}}
\begin{minipage}[t]{0.48\columnwidth}\centering
\begin{overpic}[width=\linewidth]{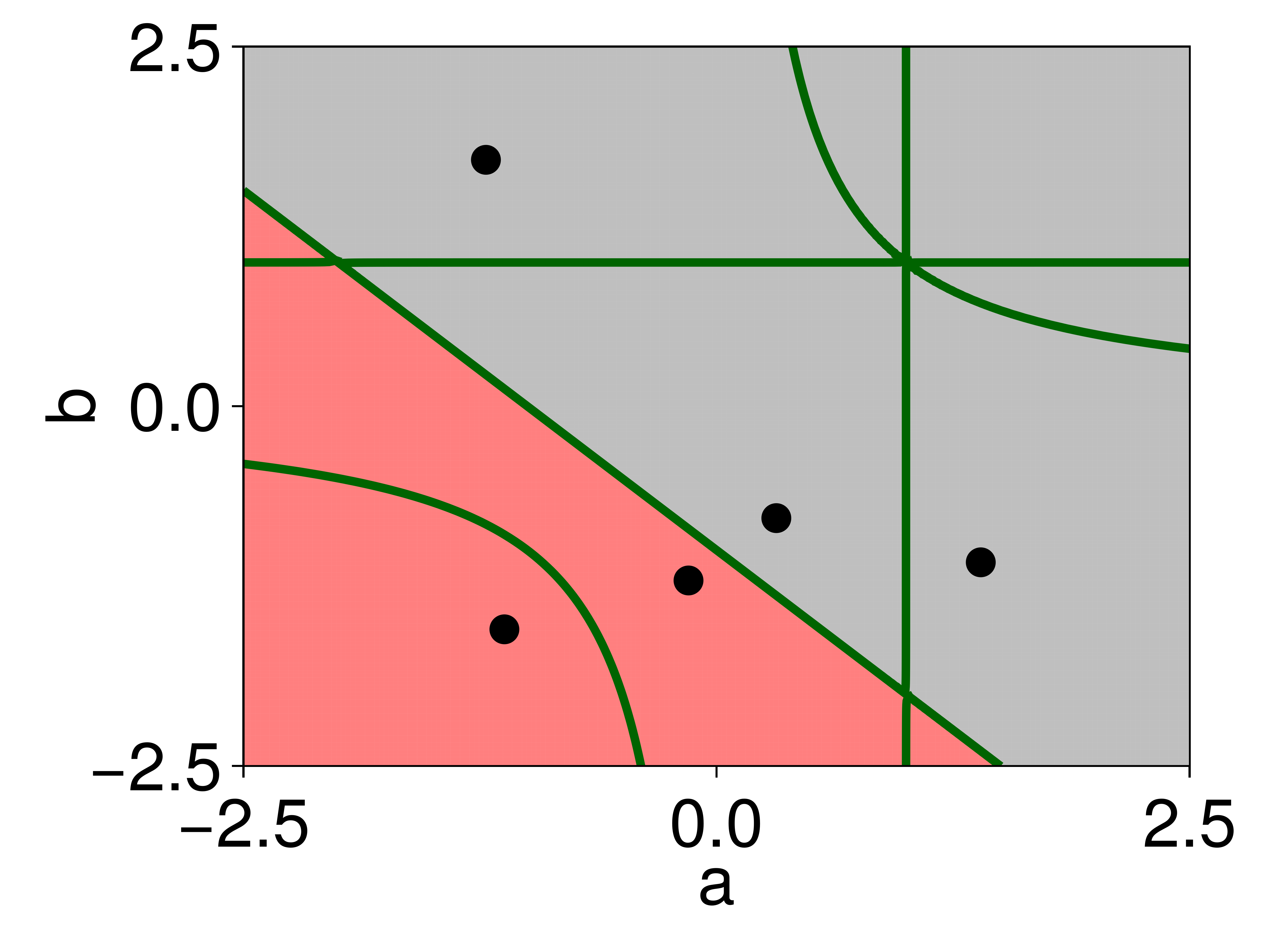}\panellabel{a}\end{overpic}\par
\panelcaption{\adjustbox{max width=\linewidth}{\footnotesize$\dot{x}_i = x_i\big(1 - x_i - a\,x_{i+1} - b\,x_{i-1}\big)$}}
\end{minipage}\hfill
\hfill
\begin{minipage}[t]{0.48\columnwidth}\centering
\begin{overpic}[width=\linewidth]{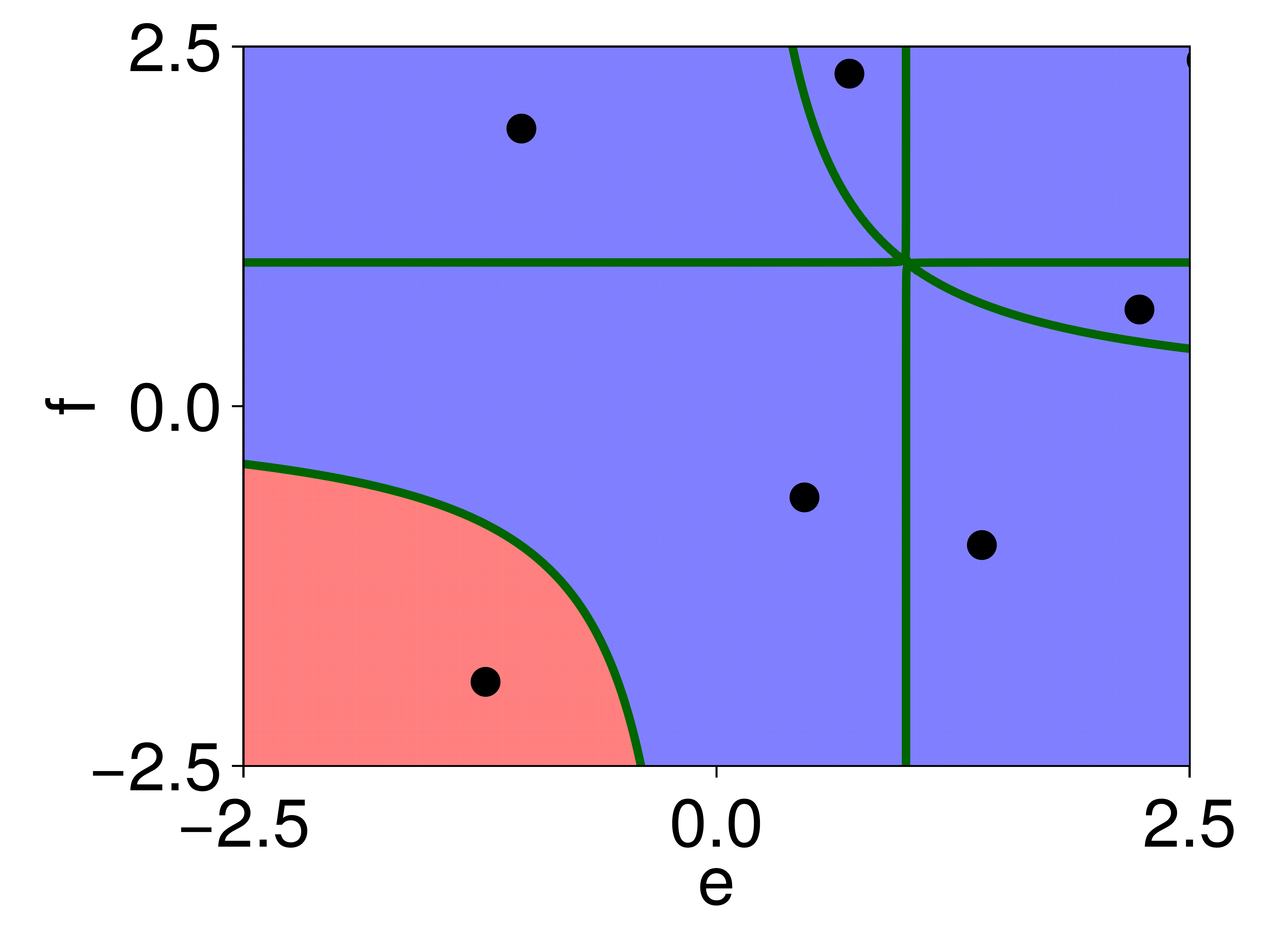}\panellabel{b}\end{overpic}\par
\panelcaption{\adjustbox{max width=\linewidth}{\footnotesize$\medmuskip=0mu \thickmuskip=.5mu \thinmuskip=0mu\relax\begin{aligned}\dot{u} &= u(1 - u - e\,v)\\[-2pt] \dot{v} &= v(d - v - f\,u)\end{aligned}$}}
\end{minipage} \hfill
\par\vspace{2pt}
\begin{minipage}[t]{0.48\columnwidth}\centering
\begin{overpic}[width=\linewidth]{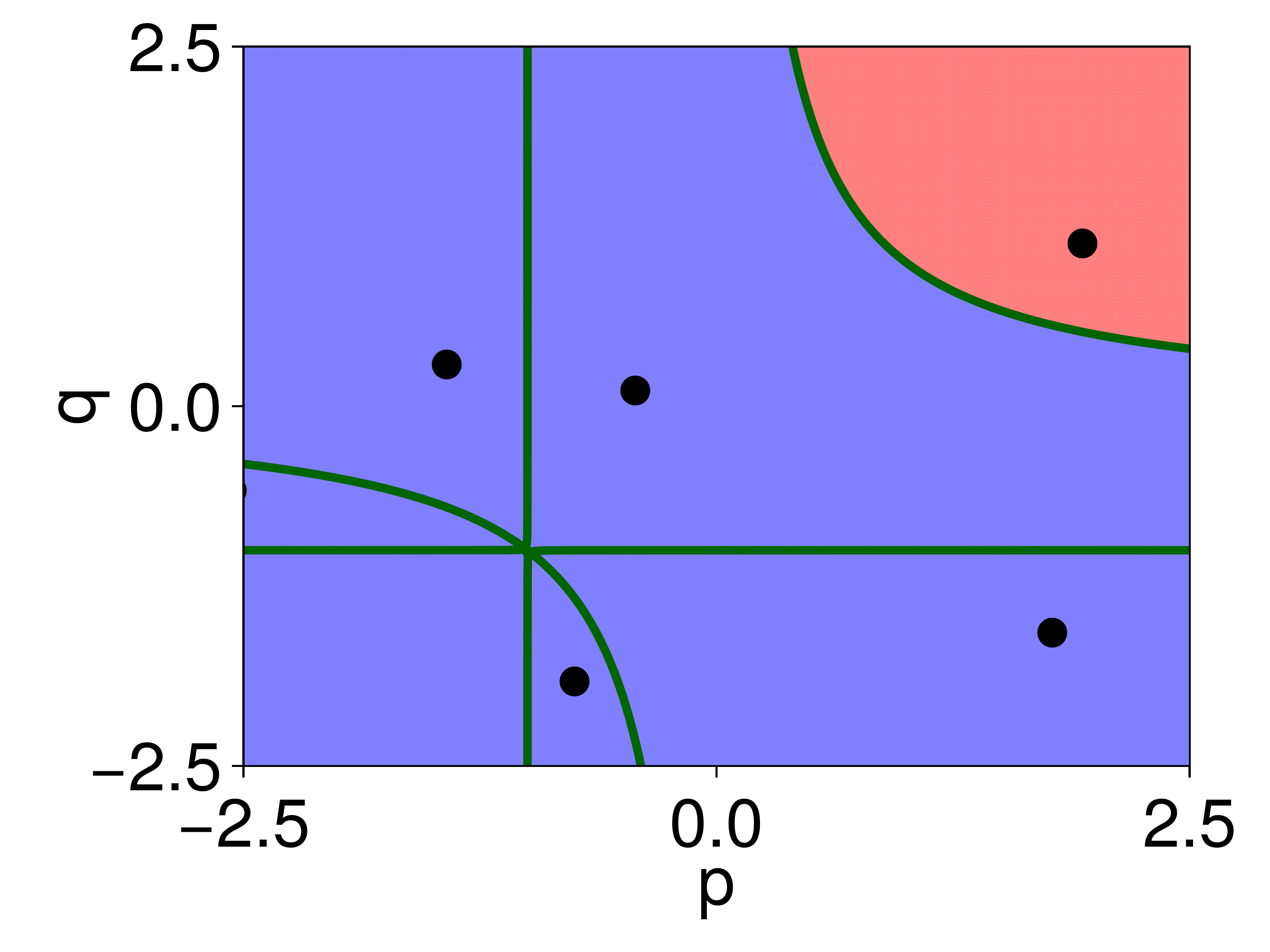}\panellabel{c}\end{overpic}\par
\panelcaption{\adjustbox{max width=\linewidth}{\footnotesize$\medmuskip=0mu \thickmuskip=.5mu \thinmuskip=0mu\relax\begin{aligned}\dot{x}_1 &= x_1(\mu_1 - x_1 + p\,x_2)\\[-2pt] \dot{x}_2 &= x_2(\mu_2 + q\,x_1 - x_2)\end{aligned}$}}
\end{minipage}
\hfill
\begin{minipage}[t]{0.48\columnwidth}\centering
\begin{overpic}[width=\linewidth]{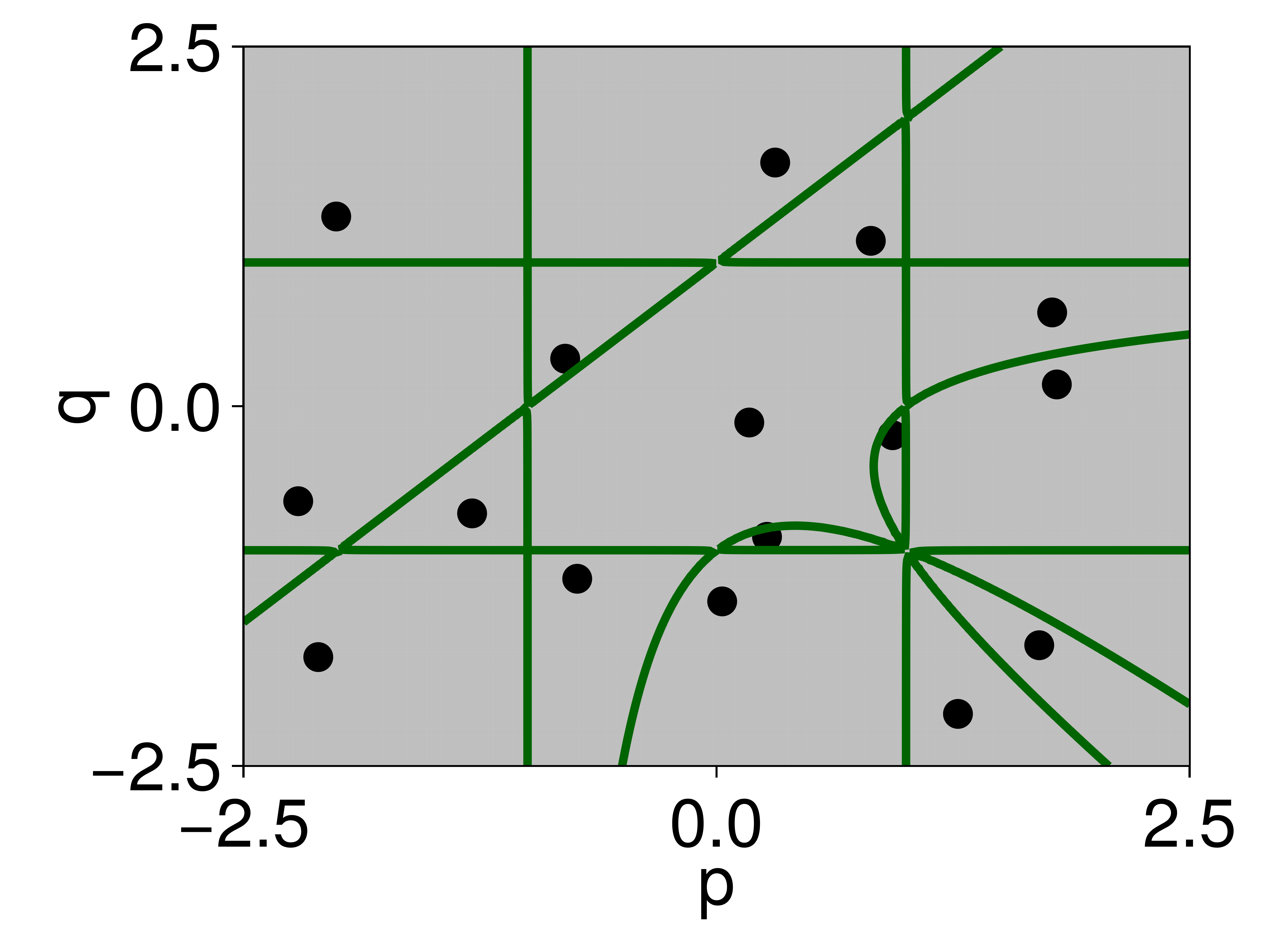}\panellabel{d}\end{overpic}\par
\panelcaption{\adjustbox{max width=\linewidth}{\footnotesize$\medmuskip=0mu \thickmuskip=.5mu \thinmuskip=0mu\relax\begin{aligned}\dot{x}_1 &= x_1(r_1 - x_1 - p\,x_2)\\[-2pt] \dot{x}_2 &= x_2(-r_2 + p\,x_1 - x_2 - q\,x_3)\\[-2pt] \dot{x}_3 &= x_3(-r_3 + q\,x_2 - x_3)\end{aligned}$}}
\end{minipage}\hfill
\par\vspace{2pt}
\begin{minipage}[t]{0.48\columnwidth}\centering
\begin{overpic}[width=\linewidth]{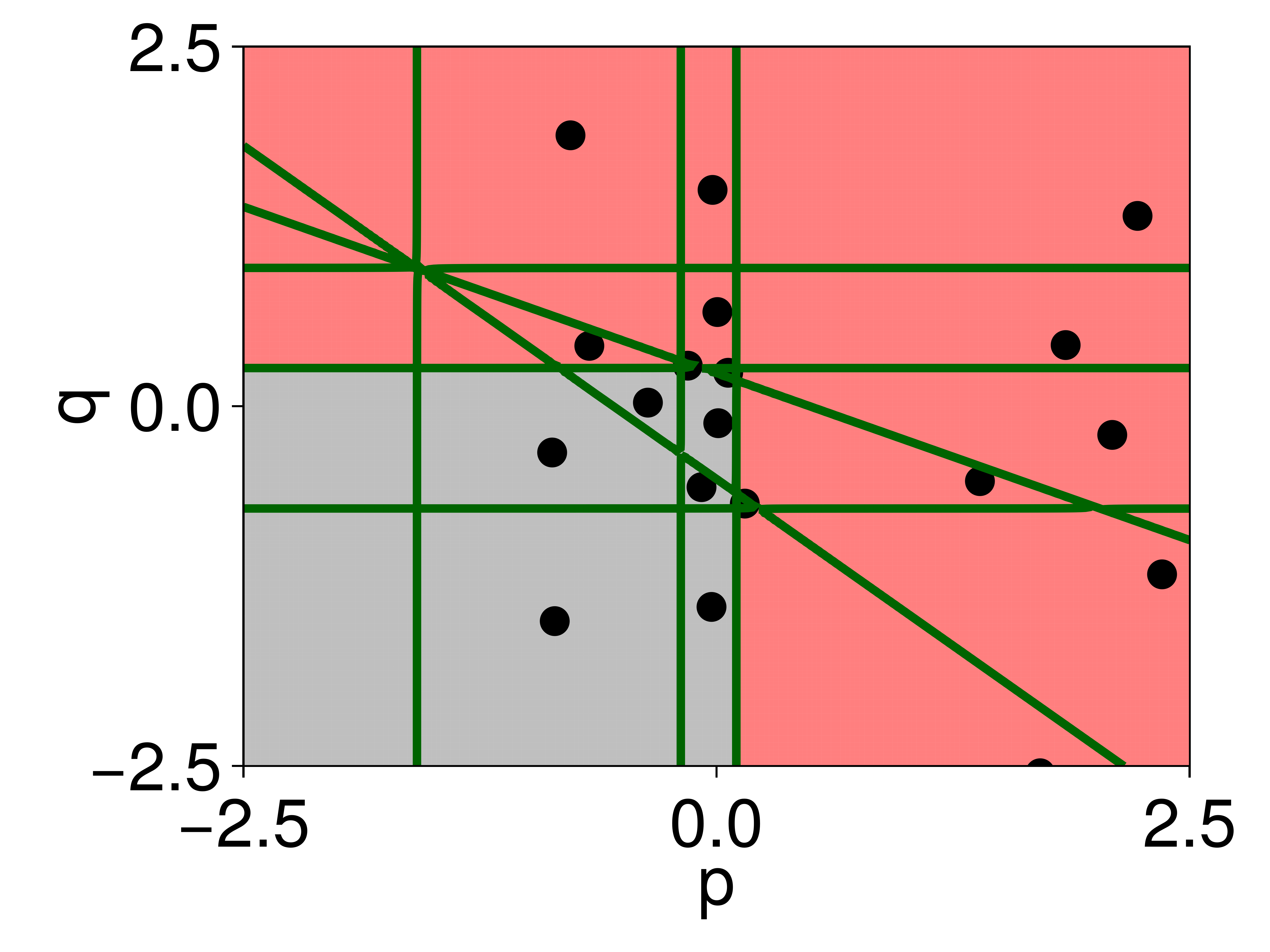}\panellabel{e}\end{overpic}\par
\panelcaption{\adjustbox{max width=\linewidth}{\footnotesize$\medmuskip=0mu \thickmuskip=.5mu \thinmuskip=0mu\relax\begin{aligned}\dot{x}_1 &= x_1(\mu_1 - 0.19x_1 - 0.33x_2 + 0.11x_3)\\[-2pt] \dot{x}_2 &= x_2(\mu_2 + 0.22x_1 - 0.71x_2 + 0.16x_3)\\[-2pt] \dot{x}_3 &= x_3(\mu_3 + p\,x_1 + q\,x_2 - 0.06x_3)\end{aligned}$}}
\end{minipage}
\hfill
\begin{minipage}[t]{0.48\columnwidth}\centering
\begin{overpic}[width=\linewidth]{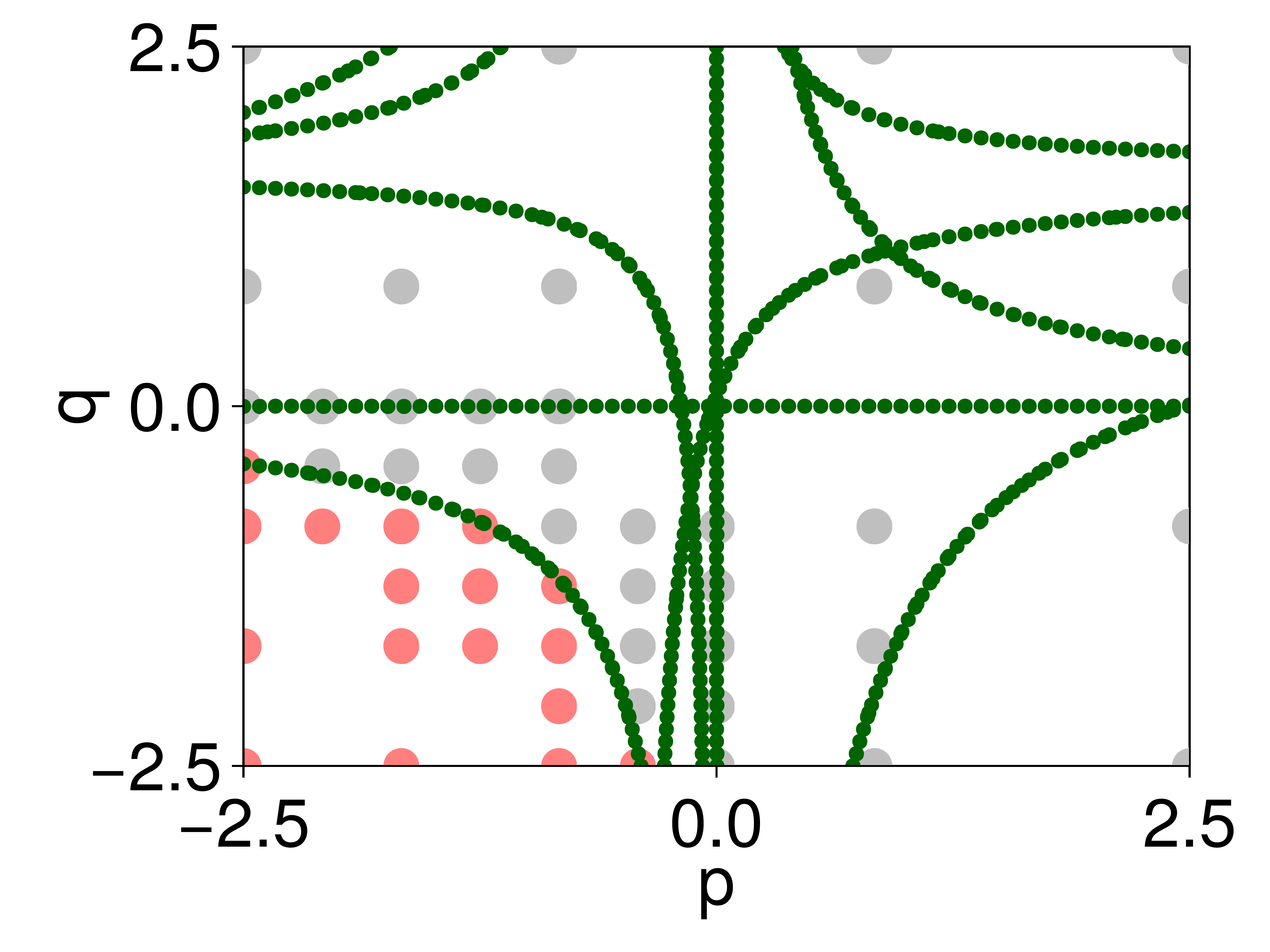}\panellabel{f}\end{overpic}\par
\panelcaption{\adjustbox{max width=\linewidth}{\footnotesize$\begin{gathered}\dot{x}_i = r_i\,x_i\big(1 - \textstyle\sum_j A_{ij}x_j\big),\\ (p,q)=(A_{12},A_{21})\end{gathered}$}}
\end{minipage}
\par\vspace{-1pt}
\centerline{\includegraphics[width=\columnwidth,clip,trim=0 75 0 76]{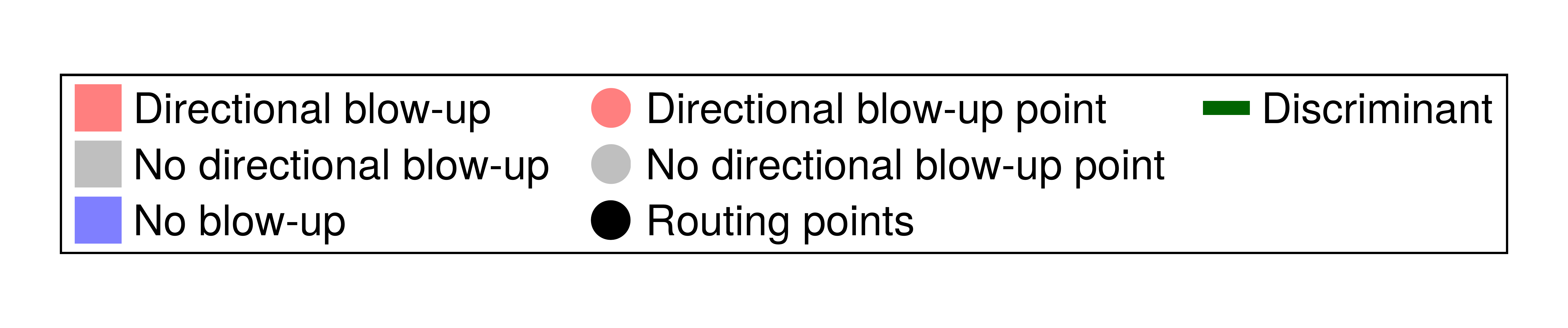}}
\vspace{-15pt}
\caption{Positive-orthant blow-up landscapes for six generalized Lotka--Volterra models.}
\label{fig:kolmogorov}
\end{figure}

\subsection{Generic Vector Fields}
\label{subsec:gen}

Our framework is most powerful when applied to large, generic parametrized families in \cref{eq:ODE}. Such families can arise, for example, when considering traveling wave ODEs associated with shock formation for conservation law systems~\cite{schecter2002traveling,achleitner2012saddle,keyfitz2012conserving,keyfitz1995spaces,kranzer1990strictly,keyfitz2011singular,matsue2020numerical,matsue2018blow,matsue2025blow}, where there is interest in the family of planar quadratic systems~\cite{isaacson1990transitional,schaeffer1987classification}. Ref.~\onlinecite{li2021survey} includes a broad survey of work on planar quadratic systems, including significant analysis of the compactified phase portraits of such systems; our work provides the computational algebraic counterpart to the analysis previously done by hand. Our computational tool also allows us to quickly push beyond planar quadratic systems.

Only the highest degree terms of $F$ are relevant for blow-up, so we probe systems of the form
\begin{equation} \label{eq:gen}
\dot{u} = F(u),
\end{equation}
where $F$ is a bivariate and trivariate homogeneous quadratic function. For visualization, we probe random linear slices of their coefficient space. We use positive-orthant blow-up for all the examples.

In \cref{fig:2dquad2}, we probe quadratic two-dimensional systems. Each leading coefficient $c$ is set to a linear form $c = A\,\lambda_1 + B\,\lambda_2$ with two free parameters, $A,B$ having numerator and denominator uniformly sampled from $[-12,12] \setminus \{0\}$. 

In \cref{fig:gen3d}, we probe quadratic three-dimensional systems. The standard pipeline is too expensive to compute, so we sample the discriminant boundaries using a pseudo-witness set~\cite{hauenstein2010witness}. We test sample points for blow-up, using marching squares to refine our grid.
As $d = 3$, we can not mark regions as ``no blow-up''; we can only say that there is no directional blow-up, with other types of blow-up undetermined.

\begin{figure}
\centering
\begin{minipage}[t]{0.48\columnwidth}
\centering
\begin{overpic}[width=\linewidth]{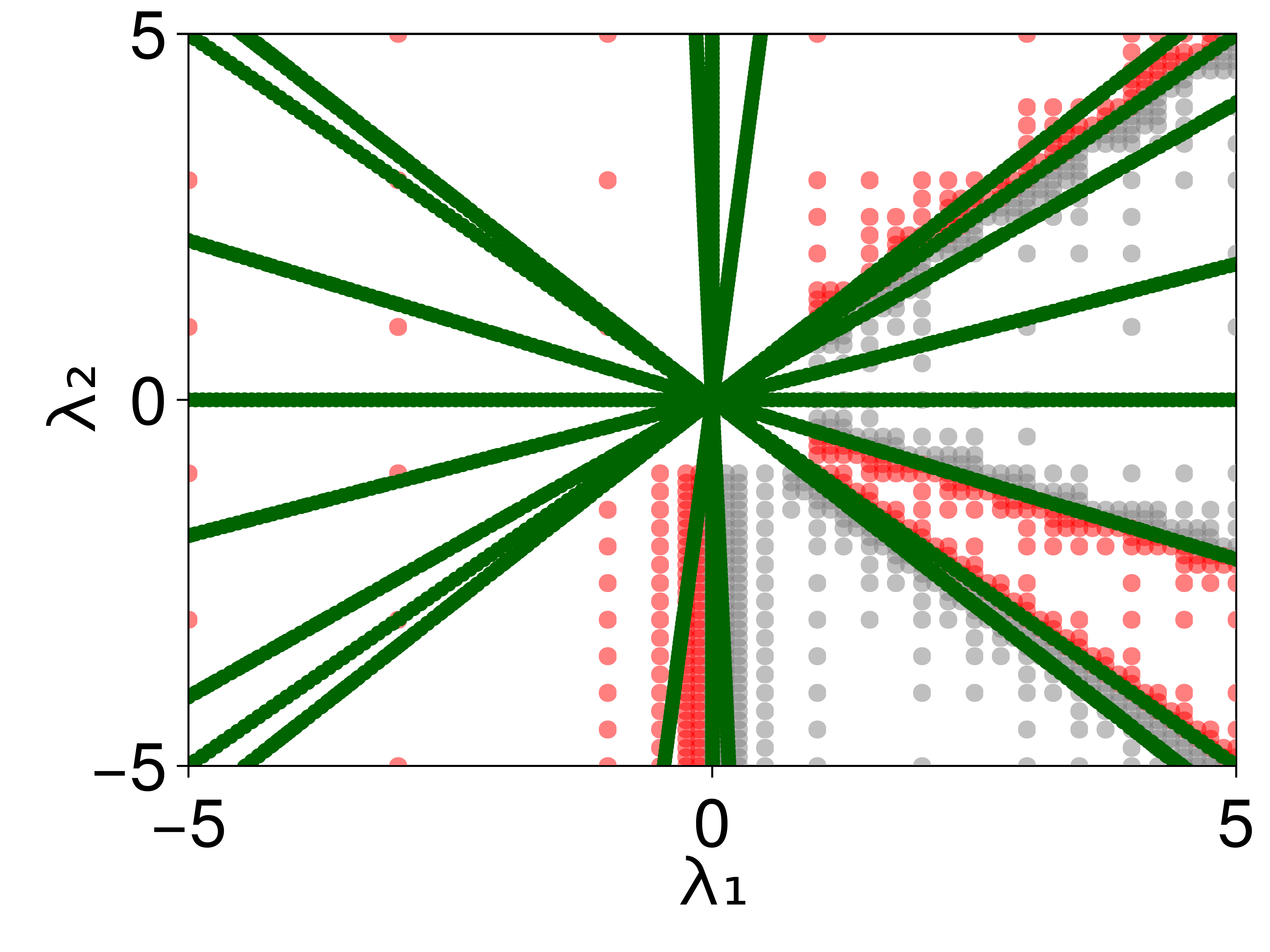}\panellabel{a}\end{overpic}\par
\panelcaption{\adjustbox{scale=0.835}{\footnotesize$\medmuskip=0mu \thickmuskip=.5mu \thinmuskip=0mu\relax\begin{aligned}\dot{x} &= -\lambda_2xz + \lambda_1y^2\\[-1pt] \dot{y} &= -\lambda_2xy + (-\lambda_1 + \lambda_2)xz + \lambda_1y^2\\[-1pt] \dot{z} &= (\lambda_1 + \lambda_2)x^2 -\lambda_1xz -\lambda_1z^2\end{aligned}$}}
\end{minipage}
\hfill
\begin{minipage}[t]{0.48\columnwidth}
\centering
\begin{overpic}[width=\linewidth]{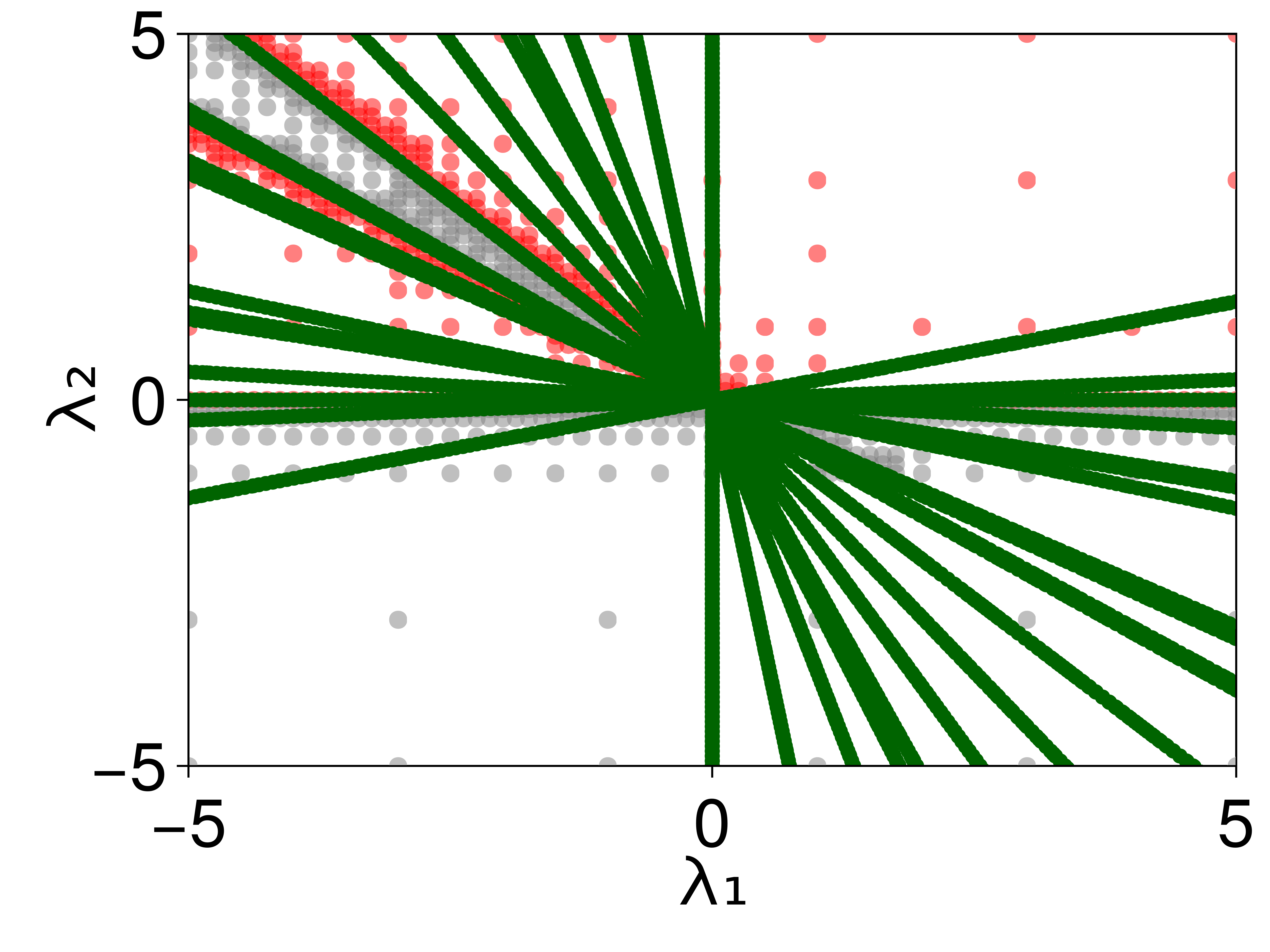}\panellabel{b}\end{overpic}\par
\panelcaption{\adjustbox{max width=\linewidth}{\normalsize$\medmuskip=0mu \thickmuskip=.5mu \thinmuskip=0mu\relax\begin{aligned}\dot{x} &= -\tfrac{11}{12}\lambda_1x^2 -\tfrac{7}{12}\lambda_1xy -\tfrac{11}{12}\lambda_2y^2 + \tfrac{1}{2}\lambda_2yz\\[1pt] \dot{y} &= \tfrac{7}{12}\lambda_2x^2 -\tfrac{5}{12}\lambda_2xy -\tfrac{1}{2}\lambda_1y^2 + \tfrac{1}{3}\lambda_1z^2\\[1pt] \dot{z} &= (-\tfrac{1}{2}\lambda_1 -\tfrac{3}{4}\lambda_2)x^2 + (\tfrac{11}{12}\lambda_1 + \tfrac{11}{12}\lambda_2)xy\\ &\quad -\tfrac{3}{4}\lambda_1y^2 + (\tfrac{5}{6}\lambda_1 + \tfrac{5}{12}\lambda_2)z^2\end{aligned}$}}
\end{minipage}
\par\vspace{2pt}
\begin{minipage}[t]{0.48\columnwidth}
\centering
\begin{overpic}[width=\linewidth]{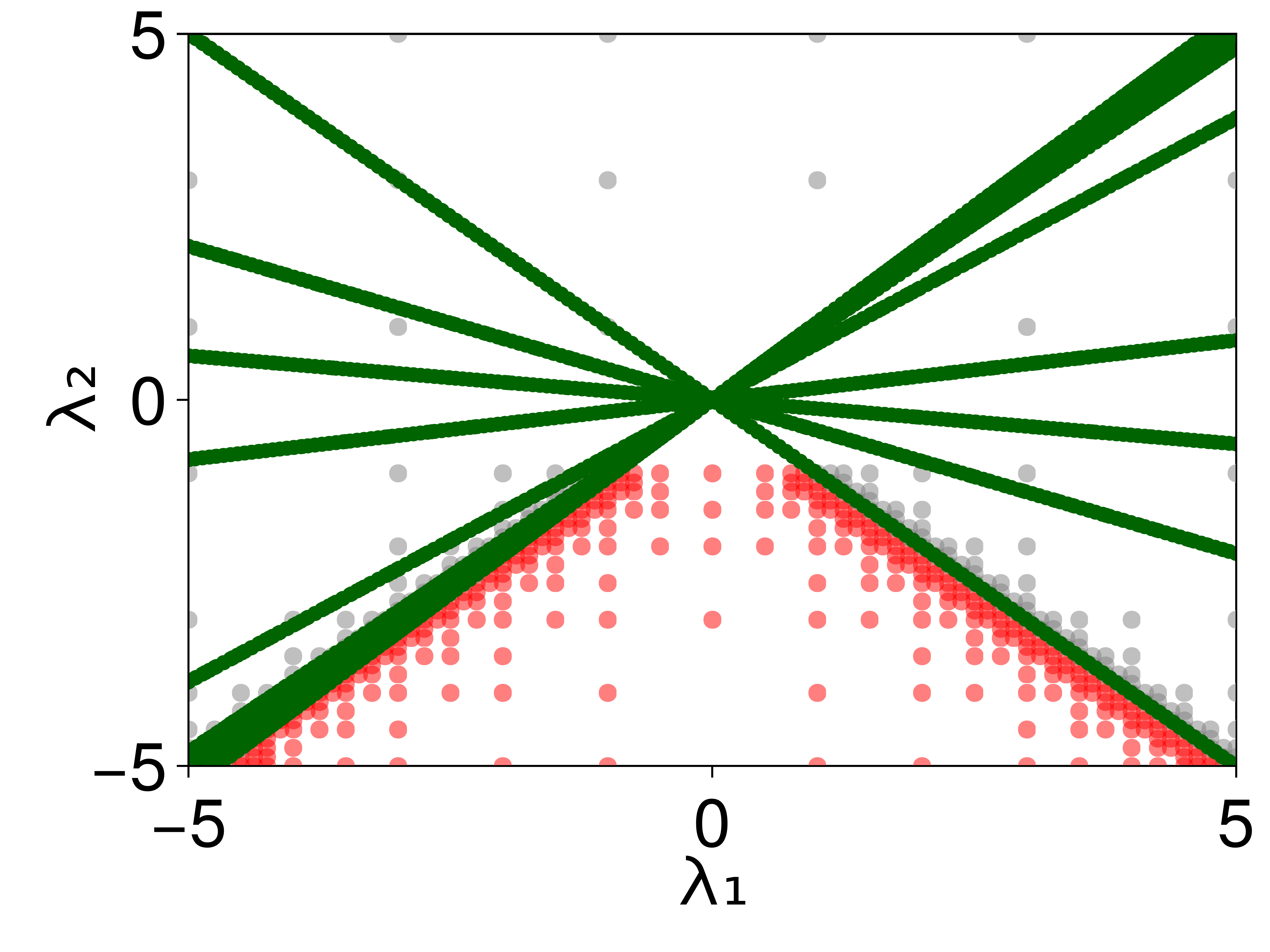}\panellabel{c}\end{overpic}\par
\panelcaption{\adjustbox{max width=\linewidth}{\normalsize$\medmuskip=0mu \thickmuskip=.5mu \thinmuskip=0mu\relax\begin{aligned}\dot{x} &= (\tfrac{1}{4}\lambda_1 + \tfrac{3}{4}\lambda_2)x^2 + \tfrac{7}{12}\lambda_1xy\\ &\quad -\tfrac{1}{4}\lambda_2xz + (-\tfrac{1}{6}\lambda_1 -\tfrac{2}{3}\lambda_2)y^2\\ &\quad + (\tfrac{1}{3}\lambda_1 -\tfrac{3}{4}\lambda_2)yz + (\tfrac{5}{6}\lambda_1 -\tfrac{1}{6}\lambda_2)z^2\\[1pt] \dot{y} &= (\tfrac{1}{4}\lambda_1 + \tfrac{1}{4}\lambda_2)x^2 + (-\tfrac{7}{12}\lambda_1 -\tfrac{11}{12}\lambda_2)xy\\ &\quad + (-\tfrac{1}{12}\lambda_1 + \tfrac{1}{6}\lambda_2)xz + (\tfrac{7}{12}\lambda_1 -\tfrac{1}{6}\lambda_2)y^2\\ &\quad + (-\tfrac{5}{12}\lambda_1 + \tfrac{1}{6}\lambda_2)yz -\tfrac{5}{6}\lambda_2z^2\\[1pt] \dot{z} &= (-\tfrac{1}{3}\lambda_1 -\tfrac{7}{12}\lambda_2)x^2 + (\tfrac{1}{12}\lambda_1 + \tfrac{1}{2}\lambda_2)xy\\ &\quad + (-\tfrac{1}{4}\lambda_1 + \tfrac{7}{12}\lambda_2)xz + (-\tfrac{7}{12}\lambda_1 -\tfrac{3}{4}\lambda_2)y^2\\ &\quad -\tfrac{1}{3}\lambda_2yz + (\tfrac{1}{3}\lambda_1 -\tfrac{5}{6}\lambda_2)z^2\end{aligned}$}}
\end{minipage}
\hfill
\begin{minipage}[t]{0.48\columnwidth}
\centering
\begin{overpic}[width=\linewidth]{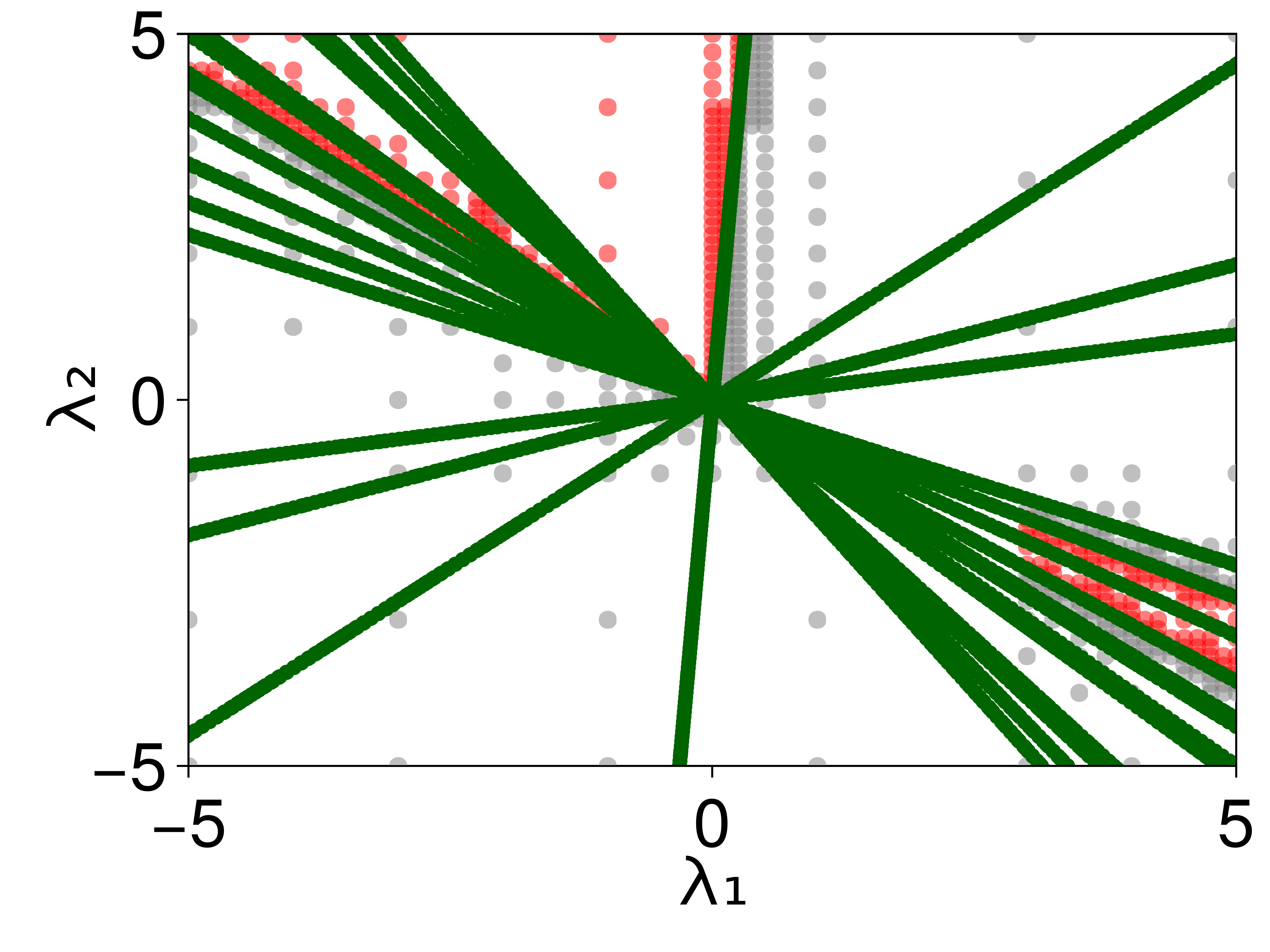}\panellabel{d}\end{overpic}\par
\panelcaption{\adjustbox{max width=\linewidth}{\normalsize$\medmuskip=0mu \thickmuskip=.5mu \thinmuskip=0mu\relax\begin{aligned}\dot{x} &= (-\tfrac{1}{4}\lambda_1 -\tfrac{5}{12}\lambda_2)x^2 + \tfrac{7}{12}\lambda_1xy\\ &\quad + (-\tfrac{1}{6}\lambda_1 -\tfrac{11}{12}\lambda_2)xz + (\tfrac{1}{6}\lambda_1 + \tfrac{1}{4}\lambda_2)y^2\\ &\quad + (-\tfrac{1}{6}\lambda_1 + \tfrac{1}{3}\lambda_2)yz + (-\tfrac{11}{12}\lambda_1 -\tfrac{1}{2}\lambda_2)z^2\\[1pt] \dot{y} &= (\tfrac{1}{2}\lambda_1 -\tfrac{1}{4}\lambda_2)x^2 + (-\tfrac{1}{2}\lambda_1 -\tfrac{1}{2}\lambda_2)xy\\ &\quad + (-\tfrac{1}{12}\lambda_1 + \tfrac{7}{12}\lambda_2)xz + (\tfrac{1}{6}\lambda_1 + \tfrac{11}{12}\lambda_2)y^2\\ &\quad + \tfrac{3}{4}\lambda_2yz + (-\tfrac{2}{3}\lambda_1 + \tfrac{11}{12}\lambda_2)z^2\\[1pt] \dot{z} &= (\tfrac{1}{3}\lambda_1 + \lambda_2)x^2 + (\tfrac{1}{3}\lambda_1 -\tfrac{1}{6}\lambda_2)xy\\ &\quad + \tfrac{1}{2}\lambda_1xz + (\tfrac{11}{12}\lambda_1 + \tfrac{11}{12}\lambda_2)y^2\\ &\quad + (-\tfrac{5}{6}\lambda_1 + \tfrac{5}{6}\lambda_2)yz + (\tfrac{3}{4}\lambda_1 + \tfrac{5}{6}\lambda_2)z^2\end{aligned}$}}
\end{minipage}
\par
\par\vspace{-1pt}
\centerline{\includegraphics[width=\columnwidth,clip,trim=0 12 0 12]{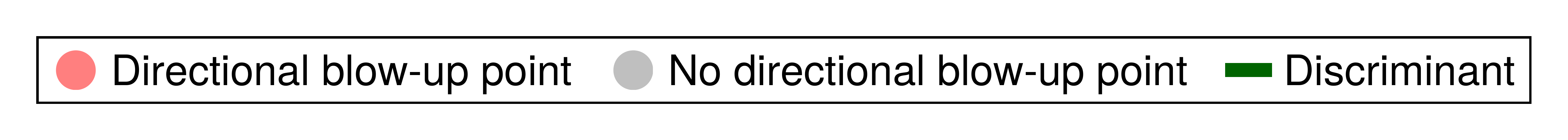}}
\vspace{-15pt}
\caption{Generic 3-variable quadratic: numerically sampled positive-orthant blow-up over $[-5,5]^2$.}
\label{fig:gen3d}
\end{figure}

\subsection{Parametric Keyfitz--Kranzer}
We study a parametrized Keyfitz--Kranzer system~\cite{keyfitz2012conserving,keyfitz1995spaces,kranzer1990strictly,keyfitz2011singular}
$$\dot u = a\,u^2 + b\,v, \qquad \dot v = f\,u^3 + g\,u v - u.$$
A similar system can be derived from a gas model~\cite{keyfitz2011singular}, where $a,b,f,g$ all depend on a single constant $\gamma$.
The highest degree components of each equation have different degrees, which degenerates our framework. We instead use quasi-homogeneous compactification~\cite{matsue2020numerical}, which replaces the normalization factor $\kappa$ defined in \cref{subsec:eqinf} by 
$\hat{\kappa} = \left(\sum_{i=1}^{d} x_i^{2 \beta_i}\right)^{1/2c}$ in ref.~\onlinecite[def.~2.3]{matsue2020numerical}, where $\beta_1,\ldots,\beta_d,c$ are constants that depend on the leading degrees in the system. This leads to an equivalent compactification framework to the one laid out in \cref{sec:background}, including new quasi-homogeneous highest degree terms that govern the behavior on the boundary of the compactified space~\cite{matsue2020numerical}. From there we can proceed with our discriminant-based decomposition. 
We make four two-parameter slices for visualization in \cref{fig:KK}.

\begin{figure}
\centering
\renewcommand{\panelcaption}[1]{\par\vspace{-6pt}{\small\centering #1\par}}
\begin{minipage}[t]{0.48\columnwidth}
\centering
\begin{overpic}[width=\linewidth]{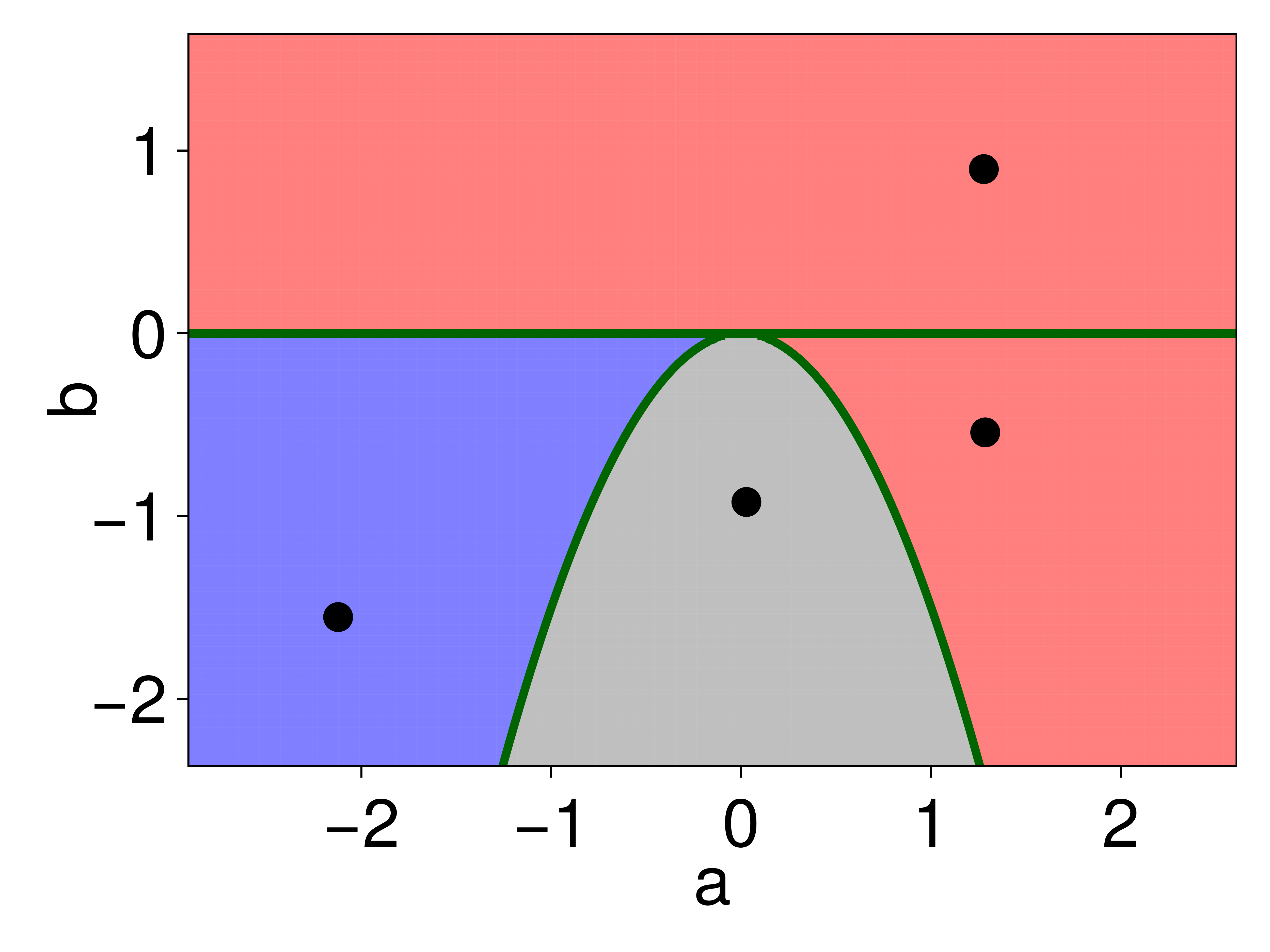}\panellabel{a}\end{overpic}\par
\panelcaption{$\dot u = a\,u^2 + b\,v,\quad \dot v = \tfrac13 u^3$}
\end{minipage}
\hfill
\begin{minipage}[t]{0.48\columnwidth}
\centering
\begin{overpic}[width=\linewidth]{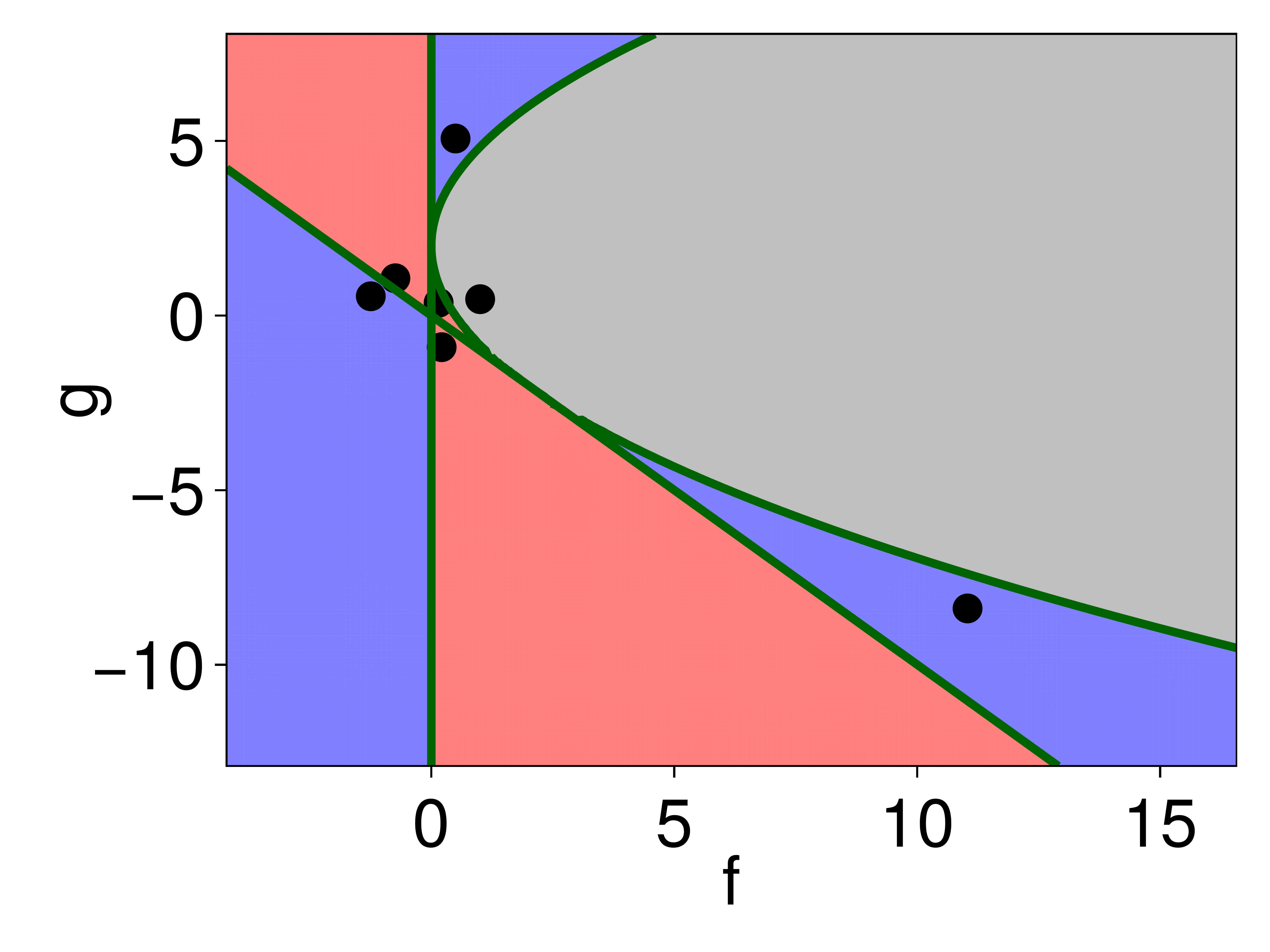}\panellabel{b}\end{overpic}\par
\panelcaption{$\dot u = u^2 - v,\quad \dot v = f\,u^3 + g\,uv$}
\end{minipage}
\par\vspace{2pt}
\begin{minipage}[t]{0.48\columnwidth}
\centering
\begin{overpic}[width=\linewidth]{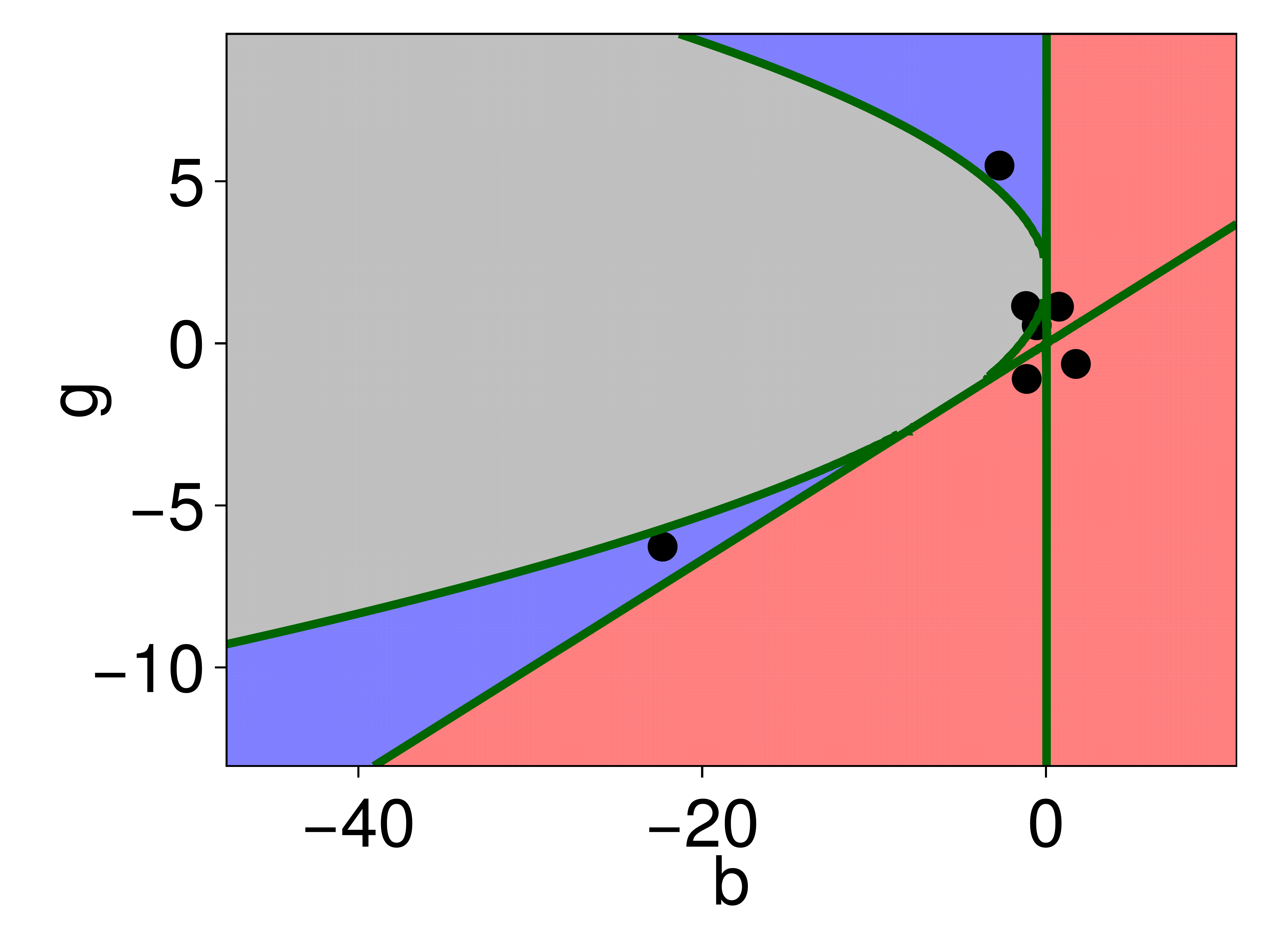}\panellabel{c}\end{overpic}\par
\panelcaption{$\dot u = u^2 + b\,v,\quad \dot v = \tfrac13 u^3 + g\,uv$}
\end{minipage}
\hfill
\begin{minipage}[t]{0.48\columnwidth}
\centering
\begin{overpic}[width=\linewidth]{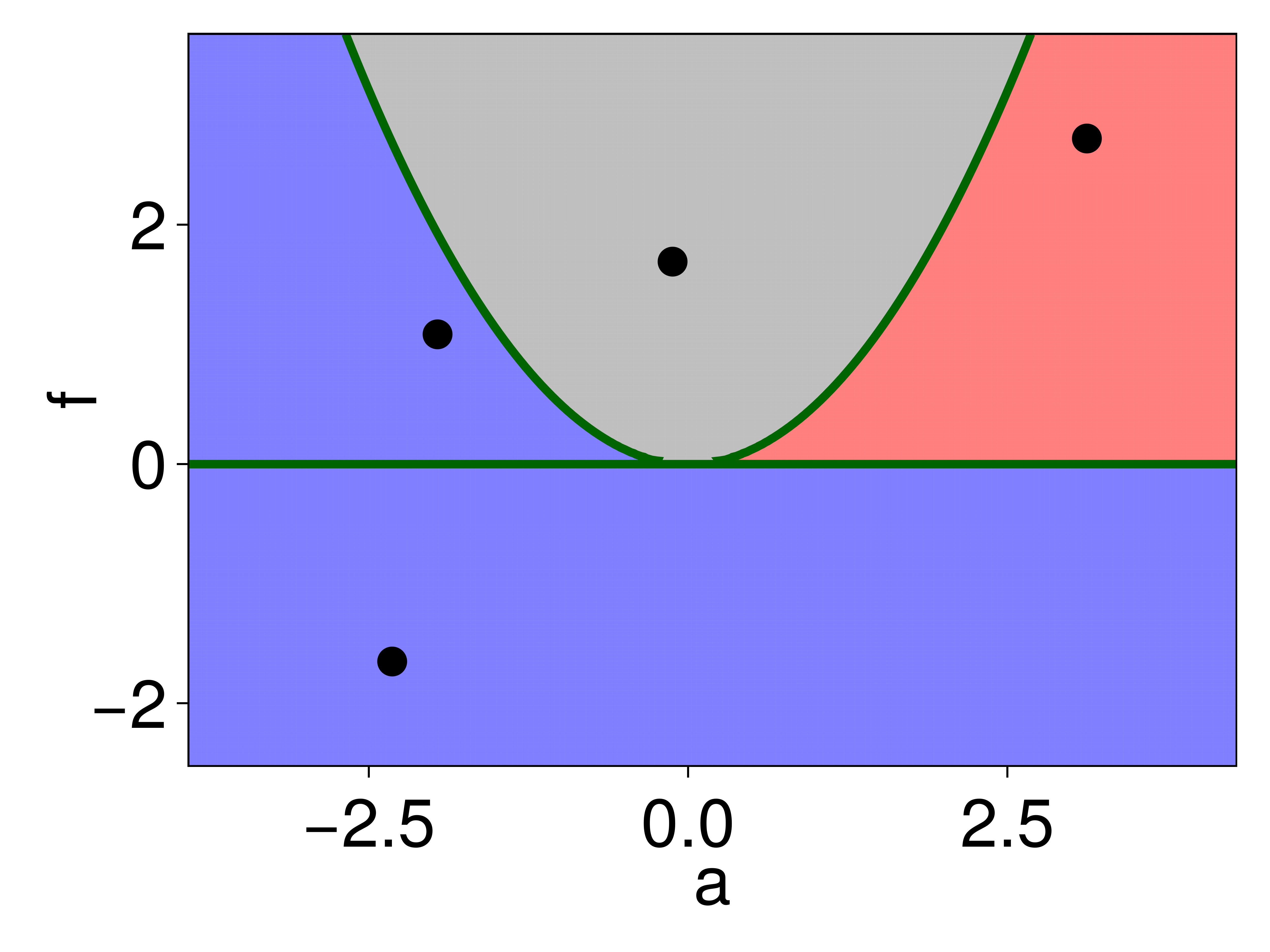}\panellabel{d}\end{overpic}\par
\panelcaption{$\dot u = a\,u^2 - v,\quad \dot v = f\,u^3$}
\end{minipage}
\par\vspace{-1pt}
\centerline{\includegraphics[width=\columnwidth,clip,trim=0 44 0 44]{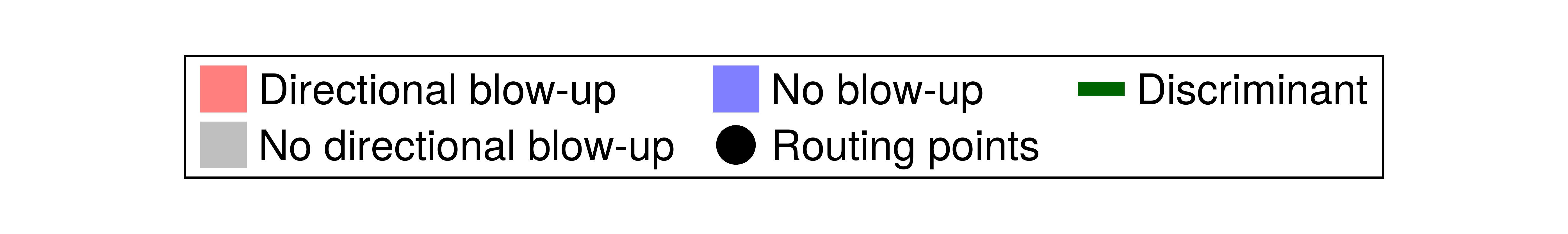}}
\vspace{-15pt}
\caption{Parametric Keyfitz--Kranzer: positive-orthant blow-up landscapes, with discriminant boundary only.}
\label{fig:KK}
\end{figure}

\section{Spiral Blow-up}
Our previous examples focus on directional blow-up, where $\frac{\xx(t)}{\|\xx(t)\|} \to \xx^* \in \R^d$, and  the compactified trajectory $(\zz(t),y(t))$ tends to an equilibrium on the sphere at infinity. But other types of blow-up are possible, so some regions can only be classified as ``no directional blow-up'', leaving other types of blow-up undetermined. In this section we explore spiral blow-up of planar systems.

On the circle at infinity $\zz(\theta)=(\cos\theta,\sin\theta)$, with unit tangent $T=(-\sin\theta,\cos\theta)$, define the tangential speed
\[
S(\theta)=\langle T, f_n(\zz)\rangle \;=\; \pm\|\tilde g\|, \quad \text{with } \tilde{g}(\zz) = g(\zz,0).
\]
A region is shaded gray if and only if $S(\theta)\neq 0$ for all $\theta$; then the boundary of the unit disc is a periodic orbit with $\dot\theta=c\,S(\theta)$ of constant sign. We study the stability of this periodic orbit by the Floquet exponent
\[
\Lambda = \oint \mu_\perp\,dT
= \int_0^{2\pi}\frac{\mu_\perp(\theta)}{|\dot\theta(\theta)|}\,d\theta,
 |\dot\theta|\propto \big\|f_n(\zz)-\langle \zz,f_n\rangle \zz\big\|.
\]
This gives the total distance that the linearization of the system moves towards the boundary during one orbit. Ref.~\onlinecite[Sec.~1.6,7.3]{dumortier2006qualitative} implies that
\(
\Lambda<0 \iff \text{boundary orbit stable} \iff \text{spiral blow-up}.
\)

To test, we study a generic two-dimensional cubic system.
Many regions cannot be classified as either directional blow-up or no blow-up. For each undetermined region in the parameter landscapes we sweep the parameter window with marching squares refinement. We classify every gray point by using the sign of $\Lambda$, and confirm by integrating the full ODE from several initial conditions. \Cref{fig:spiral-grid} overlays the result on the standard landscapes: the gray regions split cleanly into pink (spiral blow-up, $\Lambda<0$) and blue (no blow-up, $\Lambda\ge 0$) sub-regions.

\begin{figure}
\centering
\begin{minipage}[t]{0.48\columnwidth}
\centering
\begin{overpic}[width=\linewidth]{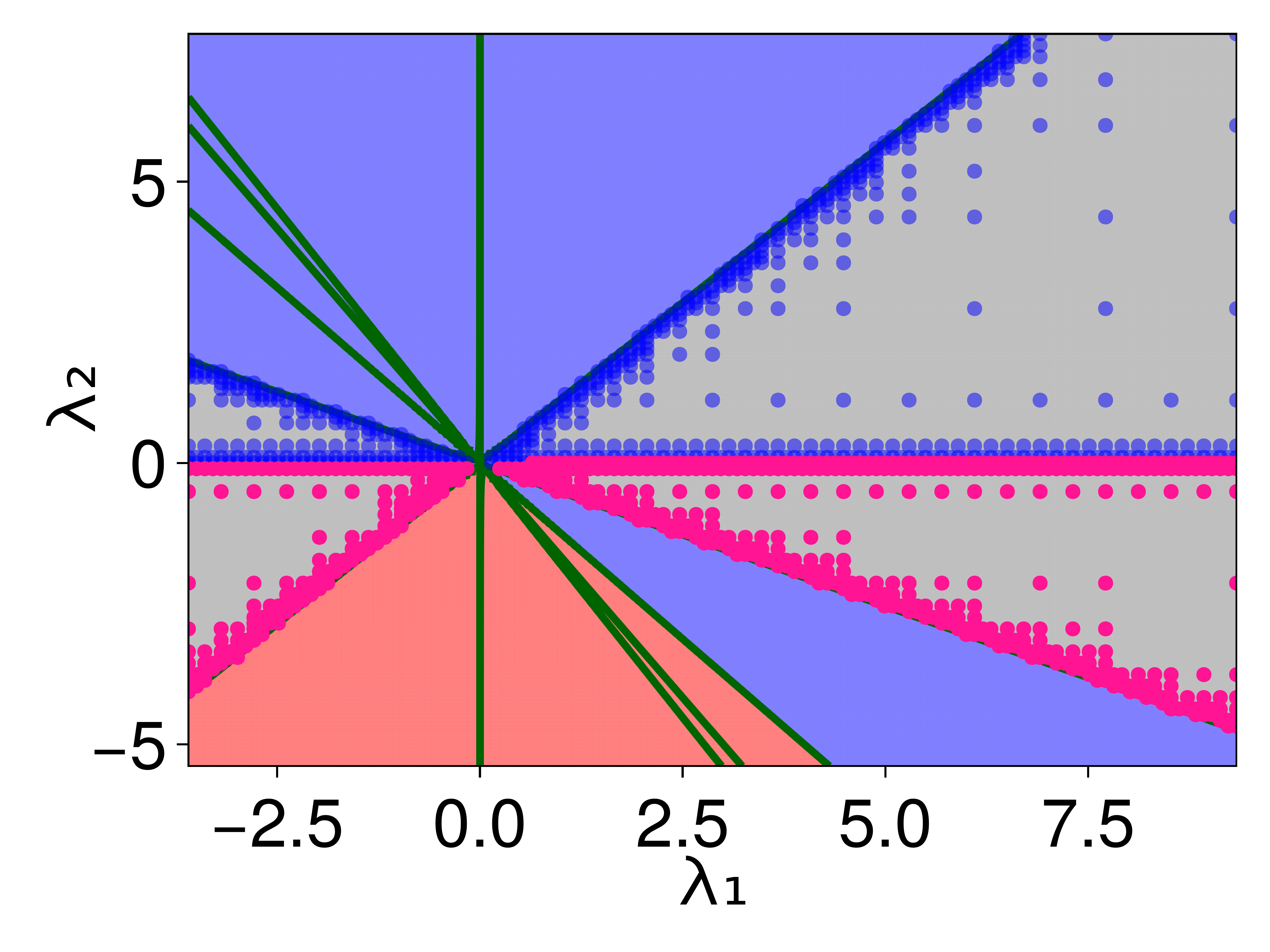}\panellabel{a}\end{overpic}\par
\panelcaption{\adjustbox{scale=0.61}{\normalsize$\medmuskip=0mu \thickmuskip=.5mu \thinmuskip=0mu\relax\begin{aligned}\dot{u} &= (-\tfrac{1}{12}\lambda_1 -\tfrac{2}{3}\lambda_2)uv^2 + (-\tfrac{5}{6}\lambda_1 -\tfrac{1}{2}\lambda_2)v^3\\[1pt] \dot{v} &= \tfrac{5}{12}\lambda_1u^3 -\tfrac{2}{3}\lambda_2u^2v -\tfrac{2}{3}\lambda_1uv^2 -\tfrac{1}{3}\lambda_2v^3\end{aligned}$}}
\end{minipage}
\hfill
\begin{minipage}[t]{0.48\columnwidth}
\centering
\begin{overpic}[width=\linewidth]{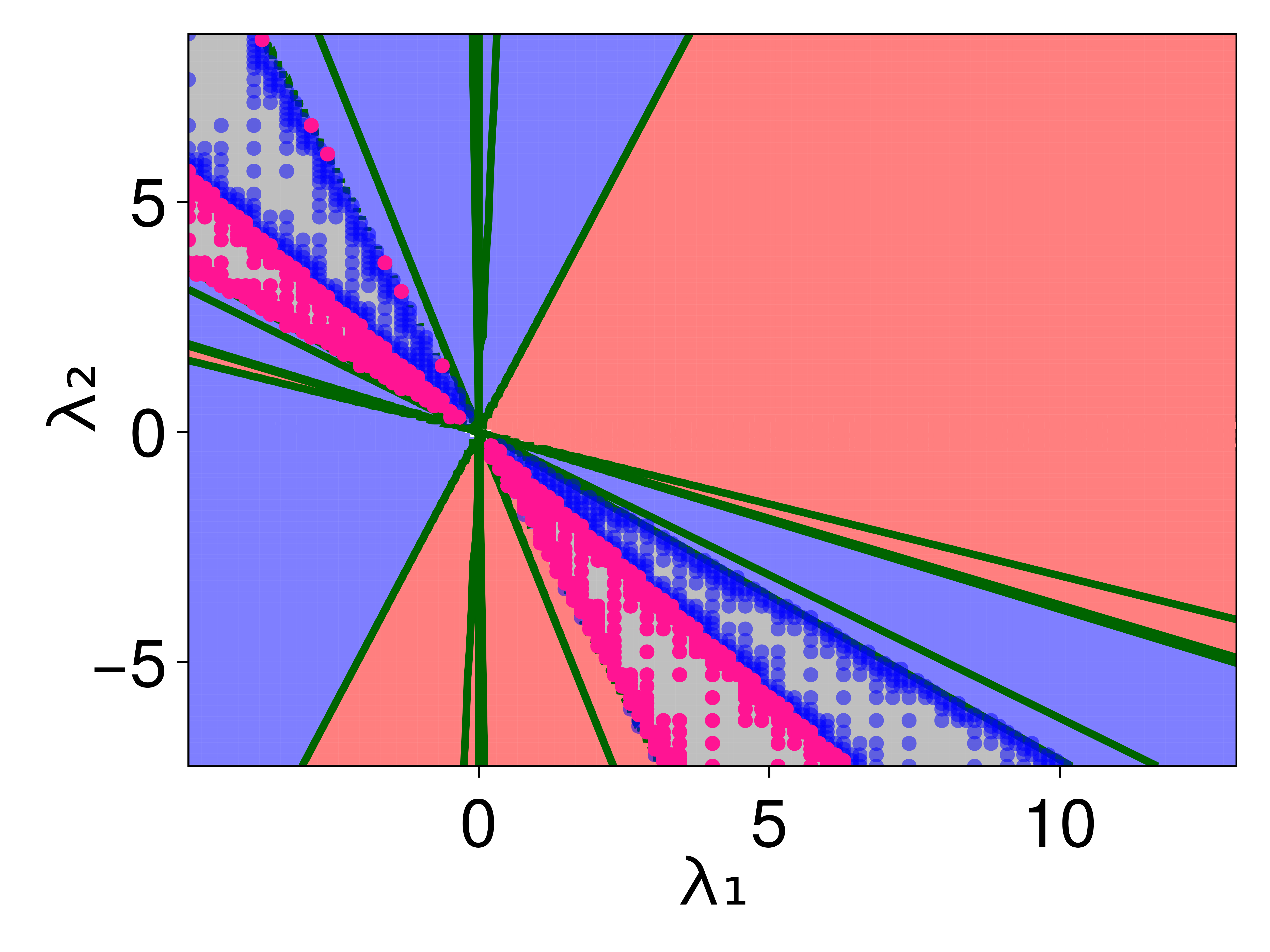}\panellabel{b}\end{overpic}\par
\panelcaption{\adjustbox{scale=0.61}{\normalsize$\medmuskip=0mu \thickmuskip=.5mu \thinmuskip=0mu\relax\begin{aligned}\dot{u} &= -\tfrac{1}{6}\lambda_2u^3 -\tfrac{7}{12}\lambda_1uv^2 + \tfrac{7}{12}\lambda_1v^3\\[1pt] \dot{v} &= (\tfrac{1}{4}\lambda_1 + \tfrac{2}{3}\lambda_2)u^3 + \tfrac{7}{12}\lambda_2u^2v -\tfrac{1}{2}\lambda_2uv^2 -\tfrac{1}{4}\lambda_2v^3\end{aligned}$}}
\end{minipage}
\par\vspace{2pt}
\begin{minipage}[t]{0.48\columnwidth}
\centering
\begin{overpic}[width=\linewidth]{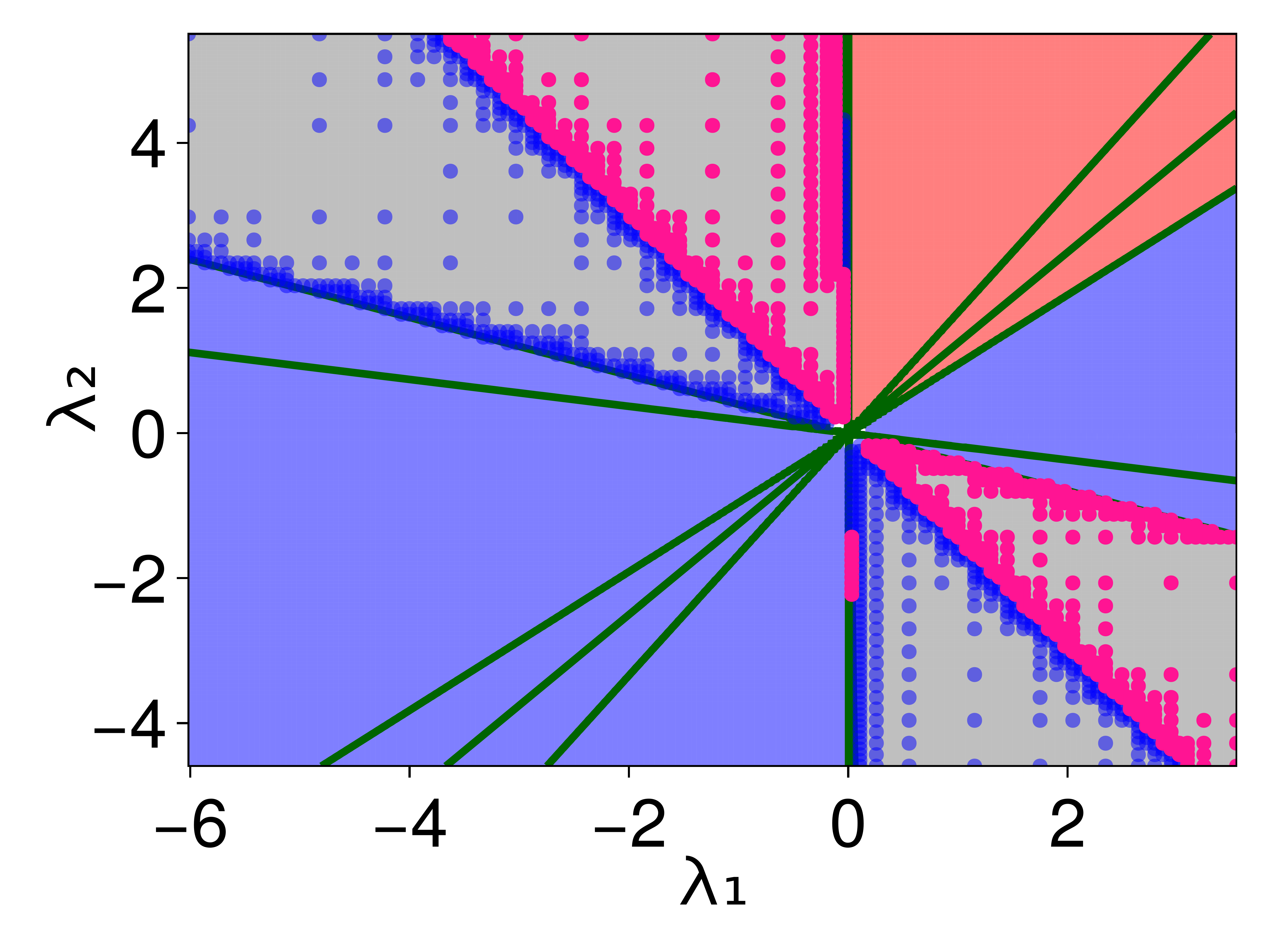}\panellabel{c}\end{overpic}\par
\panelcaption{\adjustbox{scale=0.61}{\normalsize$\medmuskip=0mu \thickmuskip=.5mu \thinmuskip=0mu\relax\begin{aligned}\dot{u} &= (\tfrac{1}{2}\lambda_1 + \tfrac{2}{3}\lambda_2)u^3 + (-\tfrac{1}{3}\lambda_1 -\lambda_2)u^2v\\ &\quad + (\tfrac{11}{12}\lambda_1 -\tfrac{1}{4}\lambda_2)uv^2 + \tfrac{2}{3}\lambda_1v^3\\[1pt] \dot{v} &= (-\tfrac{5}{12}\lambda_1 + \tfrac{1}{3}\lambda_2)u^3 + (\tfrac{1}{12}\lambda_1 + \tfrac{1}{12}\lambda_2)u^2v\\ &\quad + (\tfrac{1}{3}\lambda_1 + \tfrac{3}{4}\lambda_2)uv^2\end{aligned}$}}
\end{minipage}
\hfill
\begin{minipage}[t]{0.48\columnwidth}
\centering
\begin{overpic}[width=\linewidth]{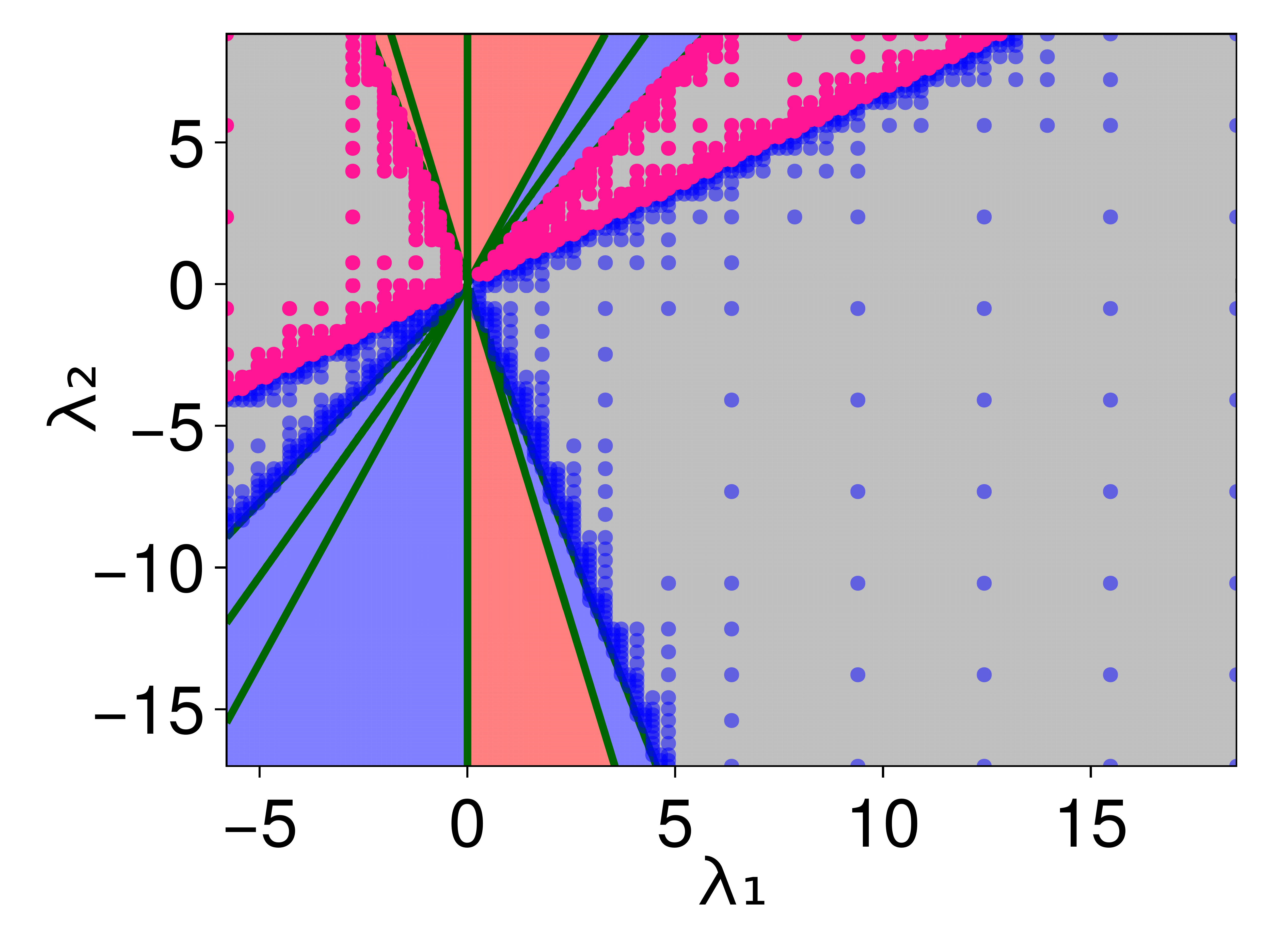}\panellabel{d}\end{overpic}\par
\panelcaption{\adjustbox{scale=0.61}{\normalsize$\medmuskip=0mu \thickmuskip=.5mu \thinmuskip=0mu\relax\begin{aligned}\dot{u} &= (-\tfrac{1}{6}\lambda_1 + \tfrac{3}{4}\lambda_2)u^3 + (\tfrac{1}{2}\lambda_1 -\tfrac{1}{6}\lambda_2)u^2v\\ &\quad + (-\tfrac{7}{12}\lambda_1 -\tfrac{1}{4}\lambda_2)uv^2 + \tfrac{5}{6}\lambda_1v^3\\[1pt] \dot{v} &= (-\tfrac{2}{3}\lambda_1 + \tfrac{1}{4}\lambda_2)u^3 -\tfrac{1}{12}\lambda_2u^2v\\ &\quad -\tfrac{5}{12}\lambda_1uv^2 -\tfrac{1}{4}\lambda_2v^3\end{aligned}$}}
\end{minipage}
\hfill
\par\vspace{-1pt}
\centerline{\includegraphics[width=\columnwidth,clip,trim=0 44 0 44]{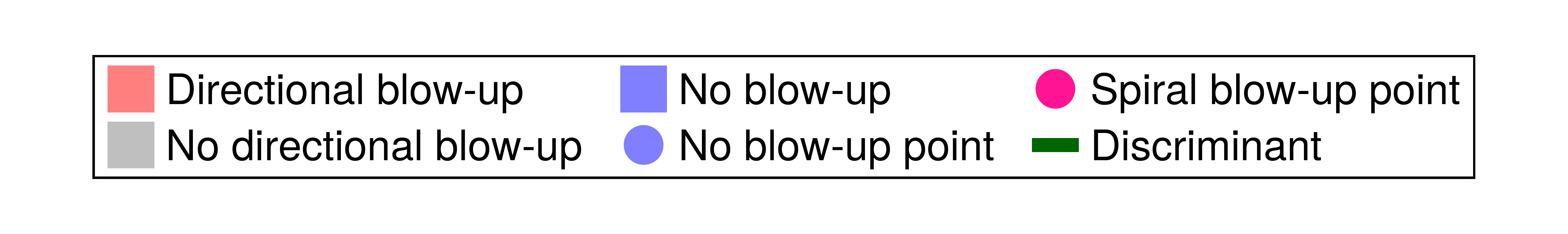}}
\vspace{-15pt}
\caption{Generic two-dimensional cubic landscapes, gray regions resolved by the boundary-integral test and simulation.}
\label{fig:spiral-grid}
\end{figure}

\section{Moehlis--Knobloch Bursting}

Moehlis and Knobloch~\cite{moehlis2000bursts,moehlis1998forced,moehlis2001effect} use equilibria at infinity to study the phenomenon of bursting: a recurrent, finite-time excursion to large amplitude. An equilibrium at infinity ($y=0$) that is radially attracting yet tangentially unstable gives a trajectory that grows to a large but finite magnitude along the stable manifold, is eventually ejected along the unstable manifold, and returns to smaller magnitude. We use the stability discriminant $X_S$ to distinguish between stable and unstable equilibria at infinity, i.e. between stable and unstable equilibria of \cref{eq:tausys,eq:ytausys} with $y=0$. We can then sort between sustained blow-up (red), when there is a radially attracting and tangentially stable equilibrium on the sphere at infinity, versus bursting (orange), when the equilibrium is radially attracting but tangentially a saddle.

We use the model in ref.~\onlinecite[Eq.~1,2]{moehlis2000bursts}. After a coordinate transformation, as in ref.~\onlinecite[Eq.~7--10]{moehlis2000bursts}, we polynomialize the resulting system by adjoining $r=\sqrt{u^2+v^2+w^2}$. Our version of the ``perfect system'' from ref.~\onlinecite{moehlis2000bursts} is then
\begin{equation} \label{eq:knobloch}
\begin{aligned}
\dot{u} &= pru - (B_I+C_I)vw \\
\dot{v} &= qrv + (B_I-C_I)uw \\
\dot{w} &= srw + 2C_Iuv \\
\dot{r} &= pu^2 + qv^2 + sw^2,
\end{aligned}
\end{equation}
with $p=2A_R+B_R+C_R, q=2A_R+B_R-C_R, s=2(A_R+B_R)$.
The physical dynamics lives on the set $r^2=u^2+v^2+w^2$, so we include this equation as an extra input to the discriminant-elimination step. We sweep the two imaginary coefficients $(B_I,C_I)$ at fixed real coefficients $(A_R,B_R,C_R)$, given in each panel of \cref{fig:knobloch}.

\begin{figure}
\centering
\renewcommand{\panelcaption}[1]{\par\vspace{-6pt}{\small\centering #1\par}}
\begin{minipage}[t]{0.48\columnwidth}\centering
\begin{overpic}[width=\linewidth]{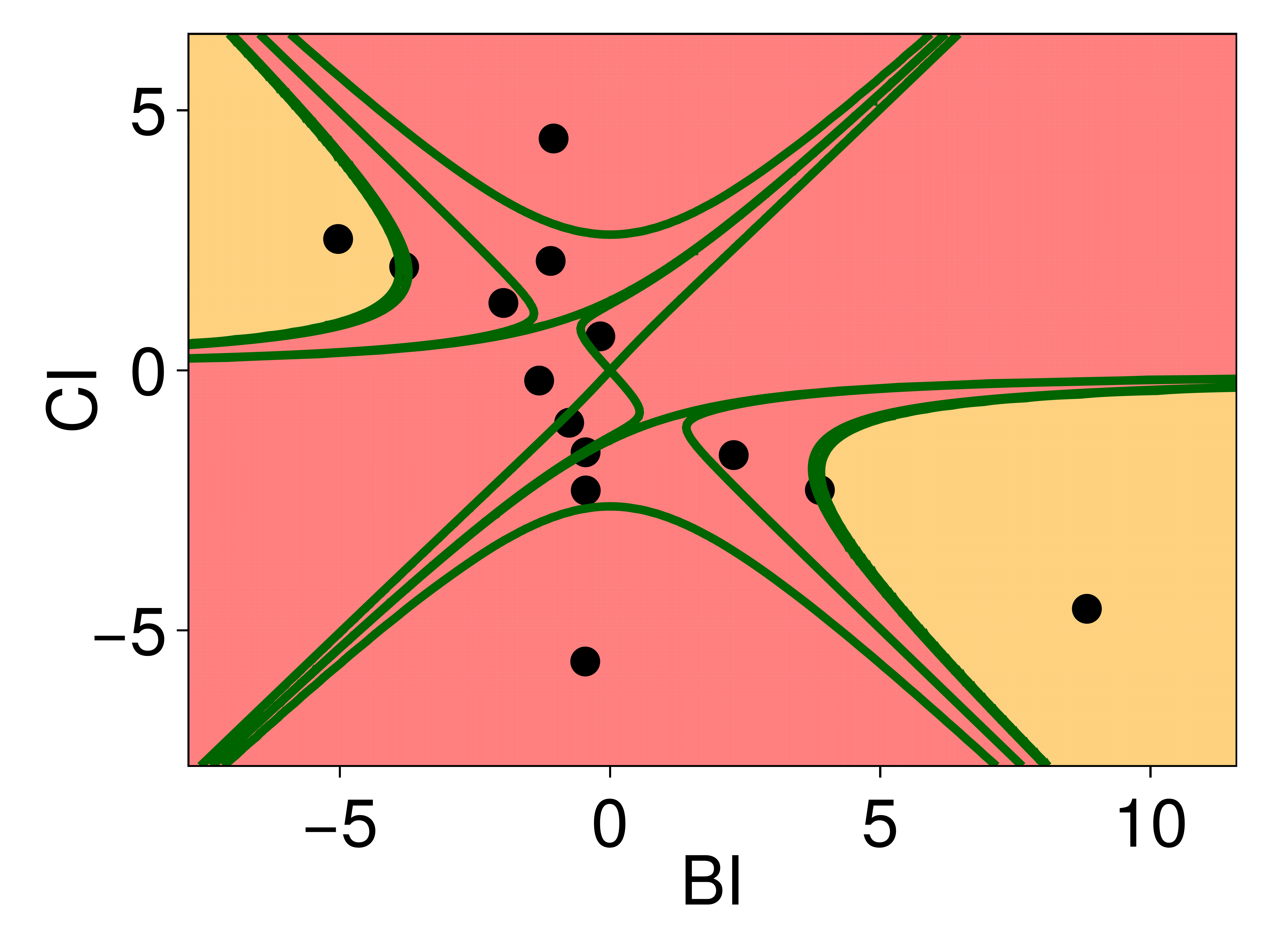}\panellabel{a}\end{overpic}\par
\panelcaption{\adjustbox{max width=\linewidth}{\footnotesize$(A_R,B_R,C_R)=(1,\,-2.8,\,-1)$}\\{\scriptsize ref.~\onlinecite[Fig.~1]{moehlis2000bursts}}}
\end{minipage}\hfill
\hfill
\begin{minipage}[t]{0.48\columnwidth}\centering
\begin{overpic}[width=\linewidth]{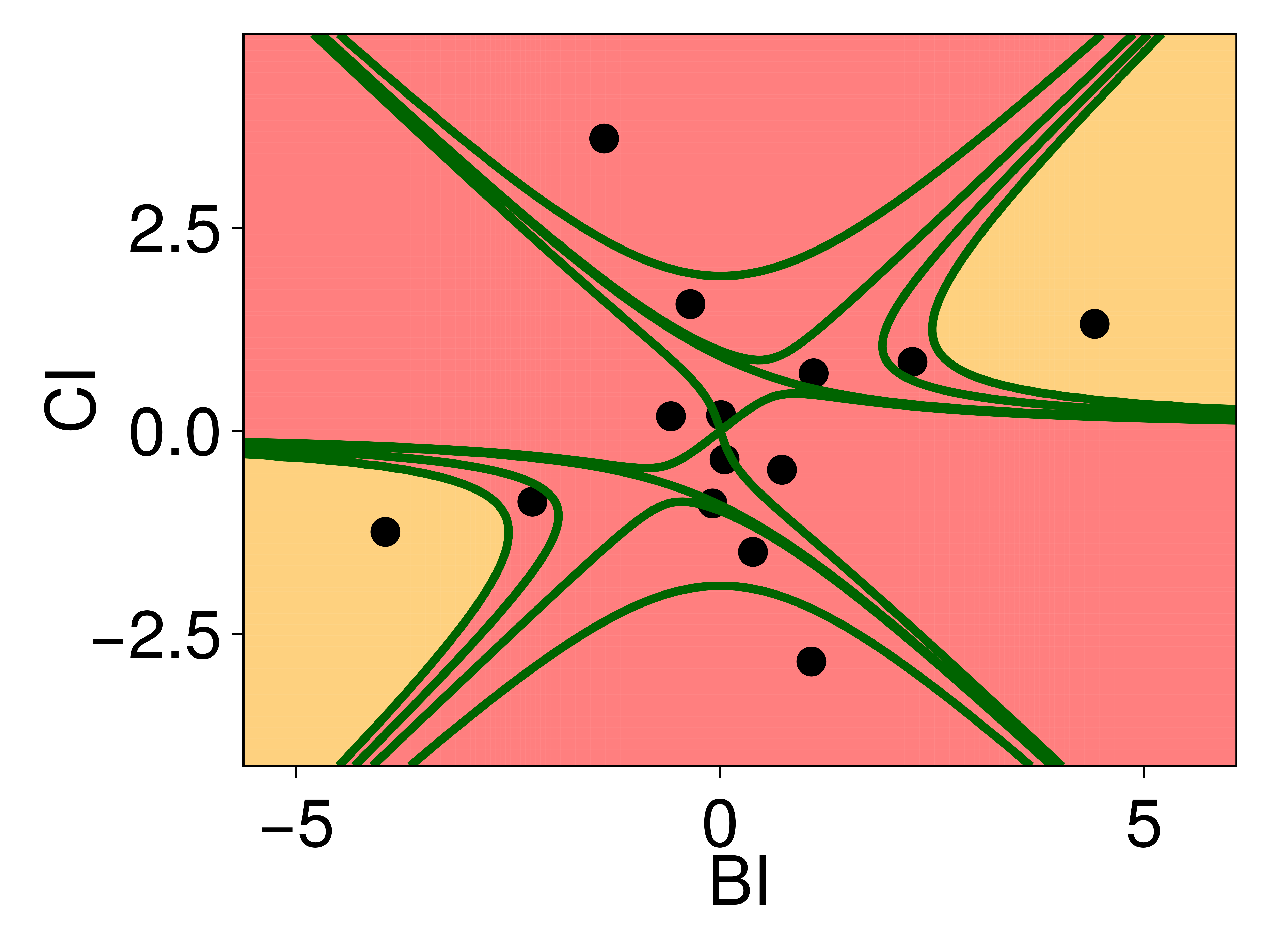}\panellabel{b}\end{overpic}\par
\panelcaption{\adjustbox{max width=\linewidth}{\footnotesize$(A_R,B_R,C_R)=(1,\,-2,\,0.6)$}}
\end{minipage}
\par\vspace{2pt}
\begin{minipage}[t]{0.48\columnwidth}\centering
\begin{overpic}[width=\linewidth]{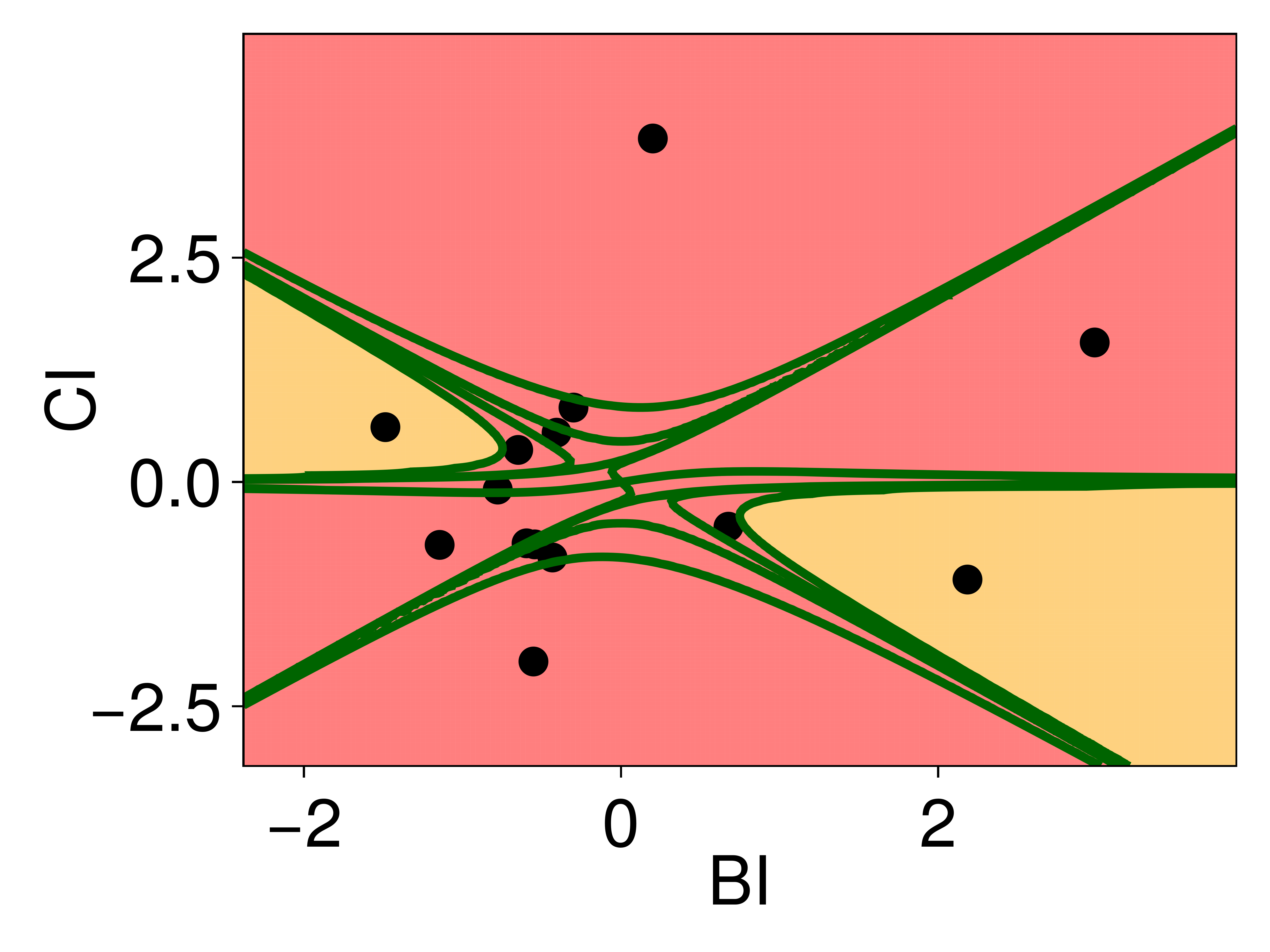}\panellabel{c}\end{overpic}\par
\panelcaption{\adjustbox{max width=\linewidth}{\footnotesize$(A_R,B_R,C_R)=(1,\,-0.5,\,-0.2)$}}
\end{minipage}\hfill
\hfill
\begin{minipage}[t]{0.48\columnwidth}\centering
\begin{overpic}[width=\linewidth]{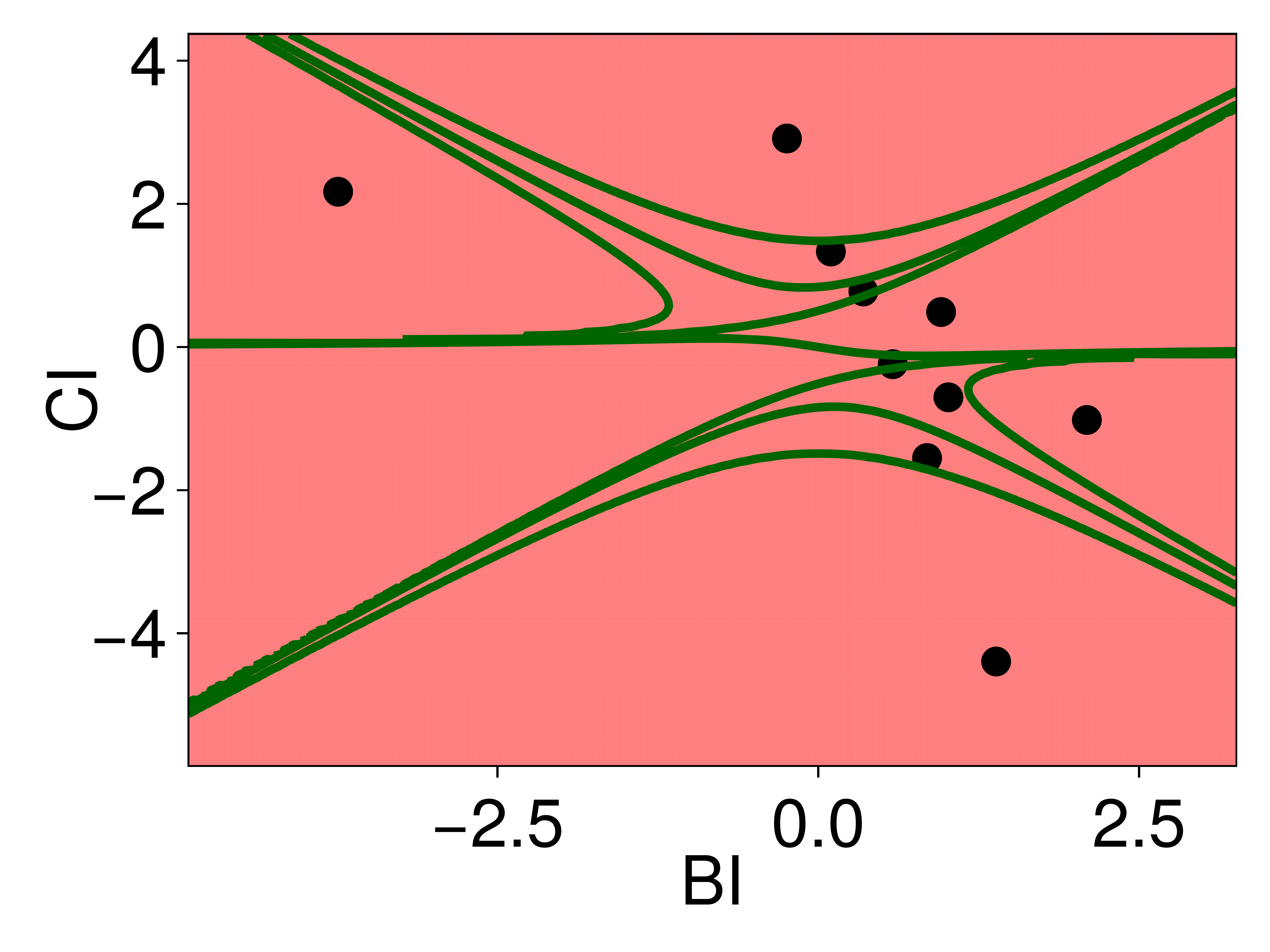}\panellabel{d}\end{overpic}\par
\panelcaption{\adjustbox{max width=\linewidth}{\footnotesize$(A_R,B_R,C_R)=(1,\,-1.5,\,-0.2)$}}
\end{minipage}
\par\vspace{-1pt}
\centerline{\includegraphics[width=\columnwidth,clip,trim=0 44 0 44]{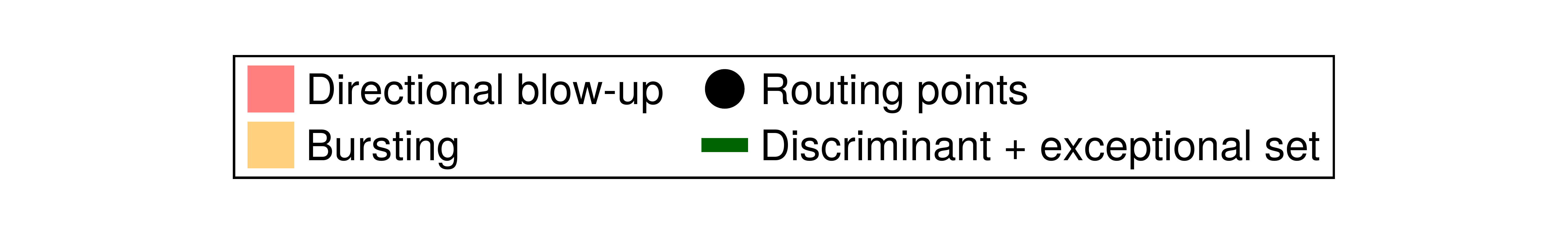}}
\vspace{-15pt}
\caption{Bursting landscapes for \cref{eq:knobloch}, over the imaginary coefficients $(B_I,C_I)$, at four choices of the fixed real coefficients. 
Panel (a) reproduces the geometry of ref.~\onlinecite[Fig.~1]{moehlis2000bursts}.}
\label{fig:knobloch}
\end{figure}

\section{Discussion and Future Directions}

Using the method of lines to discretize a polynomial partial differential equation (PDE) can result in an autonomous polynomial ODE of the form suitable for compactification-based blow-up analysis~\cite{matsue2017numerical}. One might hope that, for suitable PDEs, the behavior of the PDE is eventually well-approximated by the behavior of its method-of-lines discretizations~\cite{schiesser2012numerical}. Unfortunately, we have found that none of the approaches we tested produced a meaningful parameter decomposition. Still, we believe that it is an intriguing question for future work whether there are some PDEs, or non-polynomial ODEs, whose blow-up discriminant boundaries admit successive polynomial approximation using our method.

The possibility of saying more about non-directional blow-up in three or more variables is also worth exploring. This would likely require more analytic arguments, but would lead to a significantly richer parameter landscape.

Moreover, we have only identified blow-up here, and have said little about what can be done through or past it. Compactification is known to allow ``going past infinity'' in dynamical systems~\cite{kevrekedis2017infinity}, and it could be fruitful to combine this with our polynomial-focused techniques.

\section*{Supplementary Material}

The supplementary material contains the defining equations for the discriminant sets and the proof of~\cref{prop:decomp}.

\begin{acknowledgments}
E. G. and A. T. have been supported by the Defense Advanced Research Projects Agency (DARPA) through The Right Space (TRS) Disruption Opportunity
25 (DARPA-PA-24-04-07). The views, findings, and conclusions expressed in this paper are those of the authors and do not necessarily reflect the official policy or position of DARPA, the U.S. Department of Defense, or the U.S. Government. E.G. was also supported by NSF GRFP (DGE-2139899). A. T. has also been supported by the National Science Foundation CAREER grant DMS-2045646. The work of Y. G. K. has been partially supported by the U.S. Department of Energy and the US National Science Foundation.
\end{acknowledgments}

\section*{Author Declarations}

\subsection*{Author Contributions}
Emil Graf: Conceptualization, Methodology, Software, Formal analysis, Visualization, Writing. 
Yannis G. Kevrekidis: Conceptualization, Supervision, Writing. 
Alex Townsend: Conceptualization, Supervision, Writing.

\subsection*{Conflict of Interest}
The authors have no conflicts to disclose.

\section*{Data Availability Statement}
The code to reproduce all the figures in this paper is publicly available in a GitHub repository~\cite{EmilGithub}. 

\nocite{khalil2002nonlinear,teschl2012ordinary,krupa1995asymptotic}
\bibliography{references}

\newpage

\setcounter{equation}{0}
\renewcommand{\theequation}{S\arabic{equation}}

\setcounter{section}{0}
\renewcommand{\thesection}{S\arabic{section}}

\newtheorem*{propManual1}{Proposition \ref{prop:infconv}}
\newtheorem*{propManual2}{Proposition \ref{prop:decomp}}

\clearpage
\onecolumngrid

{\Large \textbf{Supplementary Material}}

\section{Introduction}
The goal of this supplement is to give the equations defining the discriminant sets used in the parameter-space decomposition, and to justify why the blow-up classification is constant on the resulting regions away from exceptional parameter values. Our starting point is the ordinary differential equation (ODE) system
\begin{equation} \label{eq:ODEs}
        \dot{\xx}=f(\xx,\lambda), \qquad \xx(t)\in\R^d,\quad \lambda\in\R^m,
\end{equation}
where $f$ is a polynomial of degree $n$ in $\xx$. If there exists a finite $t^*$ such that $\|\xx(t)\|\to\infty$ as $t\uparrow t^*,$
we say that the trajectory has a finite-time blow-up. We say that a system has finite-time blow-up if there is some finite initial condition $\xx(0)$ such that the trajectory starting from $\xx(0)$ has a finite-time blow-up. 

\section{Compactification of Trajectories}

We review the Poincar\'e compactification from the main text. We compactify the state space by embedding $\R^d \hookrightarrow S^{d} \subset \R^{d+1}$, the unit sphere in $\R^{d+1}$. The embedding is 
$$\xx \mapsto \left( \frac{\xx}{\sqrt{1+\|\xx\|^2}},\frac{1}{\sqrt{1+\|\xx\|^2}}\right) := (\zz,y).$$

Let $\kappa = \sqrt{1+\|\xx\|^2}$. We can derive the $\zz$ and $y$ dynamics as
\begin{equation*}
\begin{aligned}
    \frac{d \zz}{dt} &= \frac{d}{dt} (\xx/\kappa(\xx)) = \kappa^{-1} \frac{d \xx}{d t} - \kappa^{-2} \left\langle \nabla \kappa(\xx), \frac{d \xx}{dt} \right\rangle \xx \\
    &= \kappa^{-1} [ f(\xx,\lambda) - \kappa^{-1} \left\langle \zz, f(\xx,\lambda) \right\rangle \xx] \\
    &= \kappa^{-1} [ f(\kappa \zz,\lambda) - \langle \zz, f(\kappa \zz,\lambda) \rangle \zz], \\
    \frac{dy}{dt}& = -\frac{1}{2}(1+\|\xx\|^2)^{-3/2} \frac{d}{dt}\|
    \xx\|^2 = -y^3 \langle \xx, f(\xx,\lambda) \rangle.
\end{aligned}
\end{equation*}
Write
$
f(\xx,\lambda) = f_0(\xx,\lambda) + \cdots + f_n(\xx,\lambda),
$
where each $f_i$ consists of the degree $i$ homogeneous terms of $f$ in $\xx$. Define 
\begin{equation} \label{eq:ftilde}
\tilde{f}(\zz,\kappa,\lambda) \! = \! \kappa^{-n} f(\kappa \zz,\lambda)  \! = \! \kappa^{-n} f_0(\zz,\lambda) \! + \! \cdots \! + \! f_n(\zz,\lambda).
\end{equation}
Writing $\kappa = \kappa(T^{-1}(\zz,\lambda))$, where $T(\xx,\lambda) = \xx/\kappa$, we can rewrite the system as
\begin{equation} \label{eq:zsys}
    \frac{d \zz}{dt} = [\kappa(T^{-1}(\zz,\lambda))]^{n-1} [ \tilde{f}(\zz,\lambda) - \langle \nabla \kappa, \tilde{f}(\zz,\lambda) \rangle \zz].
\end{equation}
Then let
$
    \tau = \int_0^t (\kappa(\xx(s)))^{n-1} ds,
$
so that
$
    \frac{d\tau}{d t} = \kappa(\xx(t))^{n-1}.
$
Then we can rewrite the dynamics with respect to $\tau$ by
\begin{equation} \label{eq:tausys}
\begin{aligned}
    \frac{d \zz}{d\tau} &= \tilde{f}(\zz,\lambda) - \langle \nabla \kappa, \tilde{f}(\zz,\lambda) \rangle \zz := g(\zz,\lambda) \\
    \frac{d y}{d \tau} &= -y^{n+2} \langle \xx, f(\xx,\lambda) \rangle
\end{aligned}
\end{equation}
Every original trajectory that tends to infinity gives an equilibrium point at infinity in the transformed coordinates. We restate the following  proposition.
\begin{propManual1}[Ref.~\onlinecite{elias2006critical},~Prop.~2.5]
    Assume that a solution $\xx(t)$ has a maximal interval of existence $(a,b) \subset \R$, and that $\xx$ tends to infinity in the direction $\xx^*$. Then $\xx^*$ is an equilibrium point of \cref{eq:tausys} with $y = 0$.
\end{propManual1}
We use the integral
\begin{equation} \label{eq:tmax}
    t_{\max} = \int_{0}^{\infty} \frac{d \tau}{(\sqrt{1 + \|T^{-1}(\zz(\tau))\|^2})^{n-1}}
\end{equation}
to distinguish true blow-up trajectories. The trajectory blows up in finite time if and only if $t_{\max} < \infty$.

\section{Discriminant Boundaries in Parameter Space}
\label{sec:discbound}
Recall the five discriminant boundaries:
\begin{enumerate}
    \item ``The Algebraic Discriminant'': The creation or destruction of an equilibrium at infinity. 
    \item ``The Radial Discriminant'': The change of an equilibrium at infinity between attracting and repelling.
    \item ``The Stability Discriminant'': The change of an eigenvalue of the Jacobian of $f$ at an equilibrium from positive real part to negative real part.
    \item ``The Degree Discriminant'': The change in degree of $f$.
    \item ``The Positive Discriminant'': (optional) The creation or destruction of positive equilibria at infinity.
\end{enumerate}
In the language of bifurcation theory, the algebraic discriminant captures steady-state bifurcations, and the stability discriminant captures oscillatory or Andronov-Hopf bifurcations.
We call each of these a discriminant as they are similar to the classical algebraic geometry discriminant. We now explain all the discriminants in detail.
\subsection{The Algebraic Discriminant}

At infinity ($y=0$), $g=0$ is equivalent to
$$
\tilde{g}(\zz,\lambda) = f_n(\zz,\lambda) - \langle \zz, f_n(\zz,\lambda) \rangle \zz = 0.
$$
In addition, we enforce the scale so that the equilibria at infinity live on $\|\zz\|^2=1$.
The nonzero real roots of this polynomial determine the equilibria of the system at infinity. The real roots can only change at parameter values where $\tilde{g}(\zz,\lambda)$ and  $\det(J_{\zz} \tilde{g}(\zz,\lambda))$ have a common zero, where $J_{\zz}$ denotes the Jacobian with respect to $\zz$. This defines an algebraic set in the $\zz$ and $\lambda$ variables. The algebraic discriminant locus $X_A$ is the projection of this set onto the $\lambda$ variables, and the algebraic discriminant $\Delta_A$ is the set of its defining equations, so that $X_A = \{\lambda \in \R^m : h(\lambda) = 0 \, \forall \, h \in \Delta_A\}$.

\begin{ex} \label{ex:alg}
Consider the system
\begin{equation*}
\begin{aligned}
    \dot{x}_1 &= x_1^2 \\
    \dot{x}_2 &= \lambda x_1^2 - x_2^2
\end{aligned}
\end{equation*}
At infinity, the tangential vector field is
\begin{equation}
\tilde{g}(\mathbf{z}) = \begin{bmatrix} 
    z_1^2 - (z_1^3 + \lambda z_1^2 z_2 - z_2^3)z_1 \\
    \lambda z_1^2 - z_2^2 - (z_1^3 + \lambda z_1^2 z_2 - z_2^3)z_2
\end{bmatrix} = \begin{bmatrix}
    z_1 z_2 (z_1 z_2 - \lambda z_1^2 + z_2^2) \\
    z_1^2 (\lambda z_1^2 - z_1 z_2 - z_2^2)
\end{bmatrix},
\end{equation}
subject to $z_1^2 + z_2^2 = 1$. The points $(0, \pm 1)$ are equilibria for all values of $\lambda$. If $z_1 \neq 0$, then $\tilde{g} = 0$ is equivalent to the quadratic condition:
\begin{equation}
\lambda z_1^2 - z_1 z_2 - z_2^2 = 0.
\end{equation}
Dividing by $z_1^2$ and letting $u = z_2/z_1$, we obtain the characteristic equation $u^2 + u - \lambda = 0$. The roots for $u$ are given by 
\begin{equation}
u = \frac{-1 \pm \sqrt{1 + 4\lambda}}{2}.
\end{equation}
Real solutions for $u$ (and thus additional equilibria on the circle) exist if and only if $1 + 4\lambda \ge 0$. The algebraic discriminant locus is the point where these roots collide and disappear, which is $X_A = \{-1/4\}$. Consequently, there are six real equilibria at infinity for $\lambda > -1/4$ (the two static roots and the four roots corresponding to the two values of $u$) and only two real equilibria for $\lambda < -1/4$.
\end{ex}

\subsection{The Radial Discriminant} 

Recall that as $\xx \to \infty$, $y \to 0$. Then $f(\xx,\lambda) = y^{-n} f_n(\zz,\lambda) + \mathcal{O}(y^{-(n-1)})$, so
    $$
    \frac{dy}{d \tau} = -y^{n+2} \langle y^{-1} \zz,y^{-n} f_n(\zz,\lambda) \rangle + \mathcal{O}(y^2).
    $$
    We let $\mu_\perp$, the radial eigenvalue, be the derivative of the $y$-dynamics evaluated at $y=0$, that is
    $$
    \mu_{\perp} = \frac{\partial}{\partial y} \left. \left( \frac{dy}{d \tau} \right) \right\vert_{y=0} = - \langle \zz, f_n(\zz,\lambda) \rangle
    $$
    We define the radial discriminant locus $X_R$ to be the projection of the vanishing set of $\tilde{g}(\zz,\lambda)$ and $\langle \zz, f_n(\zz,\lambda) \rangle$ onto the $\lambda$ coordinates, with $\Delta_R$ its defining polynomials. 
    
    A strict definition of the algebraic discriminant would project the radial direction out, so that the algebraic discriminant governs only the tangential behavior on the boundary of the sphere. It is far simpler computationally to skip this step, in which case the radial component is included in the full Jacobian of the system, so $X_R \subset X_A$ and, if both are a single polynomial, $\Delta_R \mid \Delta_A$. In our experiments we compute $\Delta_R$ for additional information about the system but we do not use it to decompose the parameter space because it adds no new regions beyond those of $\Delta_A$. 

\begin{ex} \label{ex:rad}
Consider a univariate system ($d=1$) with parameter $\lambda \in \R$ given by $\dot{x} = \lambda x^2$. There are two total points at infinity, given by $z=\pm 1,y=0$. For univariate systems, both points at infinity are equilibria, because there are no tangential directions at infinity, and by definition the boundary is an invariant manifold (in the radial direction). Thus, both points are equilibria for all $\lambda$, and the only question is whether they are attracting. We have
$$\mu_{\perp}(1,0) = -\lambda, \qquad \mu_{\perp}(-1,0) = \lambda,$$
so the radial discriminant locus is $X_R = \{0\}$. For every $\lambda\neq0$, the system has finite-time blow-up from some initial condition, but the blow-up direction changes as $\lambda$ crosses zero. At $\lambda=0$, the degree drops and there is no blow-up. If we restrict only to positive blow-up, then the discriminant divides regions with distinct blow-up behavior.
\end{ex}

\subsection{The Stability Discriminant}

The eigenvalues of $J_{\zz} \tilde{g}$ can affect the convergence of the integral in \cref{eq:tmax}, and they also affect qualitative behavior of possible blow-ups. The stability discriminant distinguishes between regions where an equilibrium has eigenvalues with positive versus negative real parts. We consider the algebraic set consisting of the set of all $(\zz,\lambda)$ pairs such that there is an equilibrium with a purely imaginary eigenvalue. This condition can be detected using the Routh--Hurwitz conditions, which tell us when a matrix has an eigenvalue with zero real part. Specifically, let
$$
\textrm{det} (\Lambda I_n - J_{\zz} \tilde{g}(\zz,\lambda)) = c_n(\zz,\lambda)\Lambda^n + \cdots + c_0(\zz,\lambda),
$$
where each $c_i$ is polynomial in $\zz,\lambda$. Then the stability discriminant locus $X_S$ is given by the projection of the union of the vanishing sets of $c_i,i=0,\ldots,n$ onto the $\lambda$ coordinates. Similarly, we let the stability discriminant $\Delta_S$ be its set of defining polynomials, so that $X_S = \{\lambda \in \R^m : \Delta_S(\lambda) = 0 \}$. Notice that the stability discriminant locus contains the algebraic discriminant locus, that is $X_A \subset X_S$.

However, we will see that off of the stability discriminant, the blow-up behavior is only controlled by the radial eigenvalue, not by the eigenvalues of $J_{\zz}$. Therefore, this discriminant forms an exceptional set that must be avoided for our analysis to hold, but it does not split the parameter space into regions with distinct blow-up behaviors. As $X_S$ is a lower-dimensional algebraic subset, it is measure zero, so generic parameter choices avoid it.

\begin{ex} \label{ex:stab}
The system in this example cannot experience finite-time blow-up because it is linear, but it can have grow-up trajectories; it is designed to illustrate the stability boundary. To observe a stability boundary distinct from the algebraic boundary (where $X_S \setminus X_A \neq \emptyset$), the sphere at infinity must be at least two-dimensional. Consider a three-dimensional linear system ($d=3, n=1$) parameterized by $\lambda \in \R$: $\dot{x}_1 = x_1$, $\dot{x}_2 = \lambda x_2 - x_3$, and $\dot{x}_3 = x_2 + \lambda x_3$. 

At infinity, we have
$$
\tilde{g}(\zz,\lambda) = \begin{bmatrix}
    z_1 (1-r)\\
    \lambda z_2 - z_3 - rz_2 \\
    z_2 + \lambda z_3 - rz_3
\end{bmatrix} = 0,
$$
with
$
r = z_1^2 + \lambda (z_2^2 + z_3^2),
$
and
\[
J\tilde g(\mathbf z)=
\begin{bmatrix}
1 - r - 2z_1^2 & -2\lambda z_1 z_2 & -2\lambda z_1 z_3 \\
-2 z_1 z_2 & \lambda - r - 2\lambda z_2^2 & -1 - 2\lambda z_2 z_3 \\
-2 z_1 z_3 & 1 - 2\lambda z_2 z_3 & \lambda - r - 2\lambda z_3^2
\end{bmatrix}.
\]
The points $(\pm 1, 0, 0)^\top$ on $S^2$ are the only equilibria at infinity. At either equilibrium, the eigenvalues of $J$ are $-2, \lambda-1+i,\lambda-1-i$.
The equilibrium undergoes a Hopf bifurcation at infinity when $\lambda = 1$, crossing the imaginary axis without passing through zero. Thus the stability discriminant locus is $X_S = \{1\}$, while the algebraic locus $X_A$ remains empty.
\end{ex}

\subsection{The Degree Discriminant}

The convergence of the integral in \cref{eq:tmax} is also affected by the degree of $f(\zz,\lambda)$ in $\zz$ for particular choices of $\lambda$. If the degree drops below $n$, then the theory breaks down, as the coordinate system is built for $f(\zz,\lambda)$ to remain maximal degree $n$. Therefore, there is an algebraic exceptional set, which we call the degree discriminant locus, and denote $X_D$, with defining set $\Delta_D$, where $\Delta_D$ can be written explicitly as the set of all coefficient functions (of $\lambda$) for the degree $n$ homogeneous terms of $f_n$. Equivalently, $X_D = \{ \lambda \in \R^m : f_n(\zz,\lambda) = 0 \, \forall \zz \in \R^d \}$. Note that the degree discriminant is actually contained within the algebraic discriminant.

As with the stability discriminant, the degree discriminant does not split the parameter space into regions where the blow-up behavior may change. The degree is constant everywhere outside $X_D$. As $X_D$ is a lower-dimensional algebraic subset, it is measure $0$, so generic parameter choices avoid it. If desired, it is also possible to specialize to parameters solely inside $X_D$, and then re-examine other discriminants.

\begin{ex} \label{ex:deg}
Consider a univariate system ($d=1$) parameterized by $\lambda \in \R$ given by $\dot{x} = \lambda x^2 - x$. The intended degree of the system is $n=2$, with the leading homogeneous term being $f_2(x) = \lambda x^2$. 

At $\lambda = 0$, the function $f_2(x, 0)$ vanishes for all $x \in \R$. The physical system drops to degree $n=1$ ($\dot{x} = -x$), meaning the coordinate transformation scaled by $y^2$ is no longer valid, and trajectories no longer reach the sphere at infinity in finite time. The degree discriminant locus is $X_D = \{0\}$.
\end{ex}

\subsection{The Positive Discriminant}

For many physical systems, only some values of $\xx$ are meaningful. It is common for only positive $\xx$ coordinates to represent physical positions of the system, for example if the $\xx$ variables are populations, energies, etc. In this case, we introduce a variant of the algebraic discriminant, called the positive discriminant. 
We still have that the equilibria are the zeros of the highest degree homogeneous part
$$
\tilde{g}(\zz,\lambda) = f_n(\zz,\lambda) - \langle \zz, f_n(\zz,\lambda) \rangle \zz = 0.
$$
The positive discriminant locus $X_P$ is the projection of the vanishing set of $\tilde{g}$ and at least one of the coordinate functions $z_i$, with defining set $\Delta_P$.

Consider the system in \cref{ex:rad}. The positive discriminant is empty, but if we only consider the positive blow-ups, then the radial discriminant $\lambda = 0$ splits the parameter space between regions with and without blow-up.

\begin{ex} \label{ex:pos}
    Consider the system 
\[
\dot x_1 = x_1^2 - \lambda x_2^2, \qquad
\dot x_2 = x_2^2 - (1+\lambda)x_1^2.
\]
The leading homogeneous part is
\[
f_2(x) =
\begin{bmatrix}
x_1^2 - \lambda x_2^2 \\
x_2^2 - (1+\lambda)x_1^2
\end{bmatrix}.
\]
The inner product is
\[
\langle x, f_2(x)\rangle
= x_1^3 - \lambda x_1 x_2^2 + x_2^3 - (1+\lambda)x_1^2 x_2.
\]
Thus the compactified vector field
\[
\tilde g(x) = f_2(x) - \langle x, f_2(x)\rangle x
\]
has components
\[
\tilde g_1 =
x_1^2 - \lambda x_2^2
- x_1^4 + \lambda x_1^2 x_2^2
- x_1 x_2^3 + (1+\lambda)x_1^3 x_2,
\]
\[
\tilde g_2 =
x_2^2 - (1+\lambda)x_1^2
- x_1^3 x_2 + \lambda x_1 x_2^3
- x_2^4 + (1+\lambda)x_1^2 x_2^2.
\]
Evaluating at points where the coordinates vanish gives
\begin{equation*}
\begin{aligned}
    \tilde g(0,\pm1) &= (-\lambda, 0), \\
    \tilde g(\pm1,0) &= (0, -(\lambda+1)), \\
\end{aligned}
\end{equation*}
so the positive discriminant locus is $X_P = \{0,-1\}$.

\end{ex}

\subsection{Demonstrating We Have the Entire Discriminant}
\label{subsec:decompproof}

As in the main text \cref{sec:disc}, as long as $\lambda \in \R^m \setminus (X_S \cup X_D)$, in the cells defined by $X_A \cup X_R$, the following are constant:
\begin{enumerate}
    \item The number of equilibria at infinity and the sign of the radial eigenvalue $\mu_\perp$.
    \item The degree of $f$, which is always equal to $n$ on the complement of the degree discriminant.
\end{enumerate}
We prove \cref{prop:decomp}, which proves that our algebraic decomposition of the parameter space actually corresponds to the claimed blow-up behavior.

\begin{propManual2}
    Suppose $\lambda$ is fixed at a parameter value for which the degree of $f(\xx,\lambda)$ in $\xx$ is $n$, and all equilibria at infinity are hyperbolic. Suppose $n > 1$: If there is an equilibrium at infinity with $\mu_\perp < 0$, then there is an initial condition for which the system $\dot{\xx} = f(\xx,\lambda)$ blows up in finite time. Suppose $d \leq 2 $. If there exists at least one equilibrium at infinity, but there are no equilibria with $\mu_\perp < 0$, then the system does not have a finite-time blow-up for any initial condition.
    If $n = 1$, then there is no finite-time blow-up for any initial condition.
\end{propManual2}

\begin{proof}
    Previous work, as outlined in the main text \cref{sec:background}, reduces this to the question of whether there is a trajectory in the compactified space that approaches an equilibrium at infinity, such that
    $$
    t_{\max} = \int_{0}^{\infty} \frac{d \tau}{(\sqrt{1 + \|T^{-1}(\zz(\tau))\|^2})^{n-1}}
    $$
    converges. 
    The convergence is governed by the behavior of the $y$-component of the trajectory. Recall that the embedding $\R^d \hookrightarrow S^{d}$ is defined by the map $\xx \mapsto (\zz, y)$, where
    \begin{equation}
        \zz = T(\xx,\lambda) = \frac{\xx}{\sqrt{1 + \|\xx\|^2}}, \quad y = \frac{1}{\sqrt{1 + \|\xx\|^2}},
    \end{equation}
    with $T(\xx,\lambda) = \xx/\kappa$.
    Consequently, the integrand in \cref{eq:tmax} is $y(\tau)^{n-1}$, so we can write
    \begin{equation}
        t_{\max} = \int_{0}^{\infty} y(\tau)^{n-1} d\tau.
    \end{equation}
    Near an equilibrium, the behavior is governed by the linearization
    \begin{equation} \label{eq:linearization}
    \frac{d}{d\tau} \begin{pmatrix} \delta \mathbf{z} \\ y \end{pmatrix} = \begin{pmatrix} J_{\mathbf{z}} & \mathbf{w} \\ \mathbf{0}^\top & \mu_{\perp} \end{pmatrix} \begin{pmatrix} \delta \mathbf{z} \\ y \end{pmatrix} + \mathcal{O}(|(\delta \mathbf{z}, y)|^2),
    \end{equation}
    for some vector $\ww \in \R^d$.
    Since we avoid the exceptional stability discriminant $X_S$, all equilibria at infinity are hyperbolic by assumption. If an equilibrium has $\mu_\perp < 0$, it possesses a stable manifold that intersects the physical domain $y > 0$, i.e., there is a physical initial condition that converges to the equilibrium.
    To determine the convergence of $t_{\max}$ for a trajectory approaching such an equilibrium, we isolate the $y$-dynamics:
    $$
    \frac{dy}{d\tau} = \mu_\perp y + h(\delta \zz, y) y,
    $$
    where $h(\delta \zz, y) = \mathcal{O}(|(\delta \zz, y)|)$ represents the higher order terms. Because the trajectory converges to the equilibrium $(0,0)$, for any $\epsilon > 0$, there exists a time $\tau_0 > 0$ such that for all $\tau \ge \tau_0$, we have $|h(\delta \zz(\tau), y(\tau))| < \epsilon$. 
    Using the comparison lemma for ordinary differential equations (ref.~\onlinecite[lemma 3.2]{khalil2002nonlinear}), we can bound the derivative of $y$:
    $$
    (\mu_\perp - \epsilon) y \le \frac{dy}{d\tau} \le (\mu_\perp + \epsilon) y, \quad \forall \tau \ge \tau_0.
    $$
    Integrating this differential inequality from $\tau_0$ to $\tau$ yields exponential bounds:
    $$
    y(\tau_0) e^{(\mu_\perp - \epsilon)(\tau - \tau_0)} \le y(\tau) \le y(\tau_0) e^{(\mu_\perp + \epsilon)(\tau - \tau_0)}.
    $$
    Now we evaluate the integral for $t_{\max}$. Since the trajectory takes finite physical time to reach $\tau_0$, the total time is finite if and only if the tail integral converges:
    $$
    \int_{\tau_0}^{\infty} y(\tau)^{n-1} d\tau.
    $$
    Substituting our upper and lower bounds gives:
    $$
    C_1 \int_{\tau_0}^{\infty} e^{(\mu_\perp - \epsilon)(n-1)\tau} d\tau \le \int_{\tau_0}^{\infty} y(\tau)^{n-1} d\tau \le C_2 \int_{\tau_0}^{\infty} e^{(\mu_\perp + \epsilon)(n-1)\tau} d\tau,
    $$
    where $C_1, C_2 > 0$ are constants depending on $y(\tau_0)$, $\epsilon$, and $\tau_0$.
    
    Assume $n > 1$. Because $\mu_\perp < 0$, we can choose $\epsilon$ sufficiently small such that $\mu_\perp + \epsilon < 0$. Therefore, the exponent $(\mu_\perp + \epsilon)(n-1)$ is negative, and the upper bound integral converges to a finite value. This proves that $t_{\max} < \infty$, so the system has a finite-time blow-up. Conversely, if $n = 1$, the integral simplifies to $\int_{\tau_0}^{\infty} 1 d\tau$, which diverges to infinity regardless of the value of $\mu_\perp$. Hence, if $n=1$ the system cannot blow up in finite time.
    
    Finally, suppose $d \leq 2$, that there is at least one equilibrium at infinity ($y=0$), but there are no equilibria at infinity with $\mu_\perp < 0$. Note that the Poincar\'e-Bendixson theorem implies that the only possible limiting behavior on the circle $S^1$ or the trivial point at infinity (if $d=1$) is convergence to a fixed point or a finite set of fixed points connected by homoclinic and heteroclinic orbits (a graphic) (ref.~\onlinecite[Thm. 7.16]{teschl2012ordinary}). None of the fixed points at infinity are attracting from the physical space, so, as it takes any trajectory an infinite amount of fictitious time $\tau$ to reach infinity, it is impossible for a trajectory to converge to a fixed point at infinity. 
    
    Suppose $d=2$ and the limit set is a graphic. We use an argument similar to that of ref.~\onlinecite{krupa1995asymptotic} to demonstrate that the amount of time that the trajectory spends in small neighborhoods of the equilibria per cycle diverges to infinity. For small enough neighborhoods of each equilibrium the trajectory is governed by the linearization in \cref{eq:linearization}. Suppose a small neighborhood of an equilibrium in coordinates centered at the equilibrium, given by $(z,y) \in [-\delta,\delta] \times [0,\delta]$. The linearization pulls the trajectory towards $z=0$, so it must enter the box at some $y = y_{\textrm{in}}$ and leave at $y = \delta$. By solving $\delta = y_{\textrm{in}} e^{\mu_{\perp} T}$, we see that the transit time across the neighborhood is 
    $$
    T = \frac{1}{\mu_{\perp}} \log \left( \frac{\delta}{y_{\textrm{in}}} \right).
    $$
    As the trajectory approaches the boundary, $y_{\textrm{in}} \to 0$ but $\delta$ stays fixed, so the transit time approaches infinity. Between these neighborhoods, for large enough $\tau$, the trajectory does not pass through any equilibria, and so there is a finite lower bound on the speed $\|g(\zz(\tau))\|$. Therefore, the trajectory spends a finite amount of time outside the small neighborhoods of the equilibria. In each neighborhood, the velocity in the radial direction is always back into the finite region with $y > 0$, which contradicts the graphic being the limit set of the system. Therefore, blow-up is impossible.
\end{proof}

This shows that if we calculate the discriminants $\Delta_A$ and $\Delta_R$ that define the algebraic and radial discriminant sets, then they correctly separate regions with well-defined blow-up behavior. If only positive-orthant behavior is of interest, then we include $\Delta_P$ in the decomposition. If additional stability information is desired, we also use $\Delta_S$. Therefore, we have proved that the regions of the figures in the main text are correctly classified.

\end{document}